\pgfplotsset{compat=1.14}
\newtheorem{theorem}{Theorem}
\newtheorem{corollary}{Corollary}
\newtheorem{lemma}{Lemma}
\newdefinition{hypothesis}{Hypothesis}
\newdefinition{assumption}{Assumption}
\newdefinition{remark}{Remark}
\newproof{proof}{Proof}
\numberwithin{theorem}{section}
\numberwithin{corollary}{section}
\numberwithin{lemma}{section}
\numberwithin{hypothesis}{section}
\numberwithin{assumption}{section}
\numberwithin{remark}{section}
\def\Vec#1{{\boldsymbol {#1}}}
\def\Du{\boldsymbol D(\Vec{u})}
\def\HVolund#1{H^{#1}(\Omega)}
\def\HzVoltroisd{\boldsymbol{H^1_0}(\Omega)}
\def\HVoltroisd#1{\boldsymbol{H^{#1}}(\Omega)}
\def\Lund#1{L^{#1}(\Omega)}
\def\vzero{{\boldsymbol  0}}
\def\phi {\varphi}
\def\barro{\bar{\rho}}
\def\barmu{\bar{\mu}}
\def\borneBeta{9/8}
\def\Com{C}
\def\Comi#1{C_{#1}}
\def\Comou{\mathscr{C}}
\def\Kom{K}
\def\Komix{\mathcal{K}}
\def\Komou{\mathscr{K}}
\def\Cgfi{\Comou^{\scriptscriptstyle {\nabla\phi}}}
\def\Cfid{\Komix^{\scriptscriptstyle{\phi}}_2}
\def\Cu{\Comou^{\scriptscriptstyle {\Vec{u}}}}
\def\Cudc{K^u}
\def\Cudpasf{\Komix^u \left( \frac{\dmu }{\mu_I}\right)^\frac{6}{d}   
\left(\left(\frac{\Cpc}  { \mu_I}\right)^\frac{6-d}{d}
 + \Komou^u \right)}
\def\Cud{\Komix^u \biggl(\frac{\dmu }{\mu_I}\biggr)^\frac{6}{d}   
\biggl(\biggl(\frac{\Cpc}{\mu_I}\biggr)^\frac{6-d}{d}
 + \Komou^u \biggr) + \Komou^f}
\def\Cgu{\Komou^{\scriptscriptstyle {\nabla\Vec{u}}}}
\def\Cugfi{\Comou^{\scriptscriptstyle {\Vec{u}\nabla\phi}}}
\def\Cpc{K^p}
\def\Cp{\Komix^p+\mu_SC\Cgu}
\def\dfi{\delta\phi}
\def\du{\delta\Vec{u}}
\def\dup{\delta\Vec{u}^p}
\def\dro{|\delta\rho|}
\def\dmu{|\delta\mu|}
\def\prodD#1{\left<#1\right>}
\def\prodL2#1{\left(#1\right)}
\def\SPCSHY{\textit{SPKCSHY--sequence}}
\begin{document}


\begin{frontmatter}
\title{Analysis of a time-discrete scheme for the {N}avier-{S}tokes/{A}llen--{C}ahn model\tnoteref{CRSNG}} 

\author[GIREF]{J.Deteix}
\ead{jean.deteix@mat.ulaval.ca}
\author[GIREF]{G.L. Ndetchoua Kouamo\corref{cor1}}
\ead{gerard-lionel.ndetchoua-kouamo.1@ulaval.ca}
\author[GIREF]{D. ̃Yakoubi}
\ead{yakoubi@giref.ulaval.ca}

\address[GIREF]{Groupe Interdisciplinaire de Recherche en \'El\'ements Finis de l'Universit\'e Laval, D\'epartement de Math\'ematiques et Statistiques, Universit\'e Laval,Qu\'ebec,Canada}

\cortext[cor1]{Corresponding author}
\tnotetext[CRSNG]{This work was supported by the Natural Sciences and Engineering Research Council of Canada (NSERC).}


\begin{abstract}
 This paper address the approximation of the dynamic of two fluids with non matching densities and viscosities modeled by the Allen-Cahn equation coupled with the time dependent  Navier-Stokes equations.
 Existence, uniqueness and a maximum principle are obtained for a totally implicit semi-discrete in time formulation. These results are based on an original stabilized 
 fixed point algorithm for which well posedness and convergence is analyzed. 
Numerical experiments are performed to show the influence of the iterative process. 
\end{abstract}

\begin{keyword}
Navier-Stokes, Allen-Cahn, implicit time scheme, fully implicit strategy, well-posedness, convergence. 

\MSC 76D05 \sep 35Q35  \sep 65M60 \sep 76T99

\end{keyword}

\end{frontmatter}

\section{Introduction}\label{Intro}

In the broad spectrum of approaches to model multi-fluid (multi-phase) flow and the capture of its interfacial behaviour, the phase-field approach is quite attractive as it is a physically motivated model based on the competition between the different species/phases.  We refer the readers to~\cite{GroReu2007, GroReu2011, BotReu2017} for a review of some of the most common approaches (both theoretically and numerically).

This work is part of an effort to produce a new numerical algorithm for the approximation of the complex behaviour of a binary mixture of fluids.
More precisely, we are interested in the use of the Allen-Cahn model for the description of the dynamic of the mixture of two non miscible fluids coupled to the unsteady Navier-Stokes equations describing fluids displacements. This leads to what is known as an unsteady  \textit{Navier-Stokes-Allen-Cahn} (NS-AC) model.

We consider an unsteady flow of two immiscible incompressible fluids of nonnegative constants  densities  $\rho_a, \rho_b >0$ and viscosities $\nu_a,\nu_b>0$. 
Introducing the scalar function $\phi$ (called a phase field)
$$
\phi(t,\Vec{x}) = \begin{cases}
-1 & \Vec{x} \text{ in fluid a}\\
1 & \Vec{x} \text{ in fluid b}\\
\end{cases}
$$
representing the difference of volume fraction of both fluids, we define
the density and viscosity of the fluids mixture as
\begin{equation}\label{EqRhoMu0}
\rho(\phi) = \barro + \frac{\delta\rho}{2} \phi,
\qquad
\mu(\phi) = \barmu+\frac{\delta\mu}{2} \phi
\end{equation}
with
\begin{equation}\label{BorneRhoMu0}
\begin{array}{c}
\displaystyle \bar{\rho} = \frac{\rho_a+\rho_b}{2},\ \  \delta\rho = \rho_a-\rho_b,\ \  \rho_I = \min\{\rho_a, \rho_b\},\ \   \rho_S = \max\{\rho_a,\rho_b\}. 
\vspace*{8pt}
\\
\displaystyle \bar{\mu} = \frac{\mu_a+\mu_b}{2},\ \  \delta\mu = \mu_a-\mu_b,\ \   \mu_I = \min\{\mu_a, \mu_b\},\ \  \mu_S = \max\{\mu_a,\mu_b\}.
\end{array}
\end{equation}
The behavior of $\phi$ is characterized using the Allen-Cahn model which describes the  separation of a mixture composed of two species (see \cite{Emm2003, BotReu2017, GroReu2011} concerning the physical details of the model). 
Although this model raises questions (mass conservation is violated), we neglect modifications to the system in order to recuperate the mass conservation as discussed in \cite{CheHilLog2010,AlfAli2014} for instance. Then the phase field satisfy 
\begin{equation}\label{EqAC}
\partial_t \phi + \left(\Vec{u} \cdot \nabla \right) \phi = 
\gamma\left( \Delta \phi - \frac{d}{d\phi}F(\phi) \right)
\end{equation}
where the nonnegative constant $\gamma>0$ describes the mobility coefficient, $F$ is the free energy density of the mixture, (a double well potential), defined as 
\begin{equation}\label{Appr-F}
F(\phi) = \frac{1}{4\eta^2}(\phi^2 -1)^2\qquad\displaystyle \frac{dF(\phi)}{d\phi} = f(\phi) = \frac{1}{\eta^2} \phi
\left( \phi^2 -1\right)
\end{equation}
and $0<\eta$ is a parameter related  to the thickness of the transition zone between both fluids. Assuming this thickness is small with respect to the other dimensions, in fact we assume that $\eta^2\ll\gamma$, the interface between both fluids can be described as the zero level of $\phi$.

The incompressible nature of the fluids imposes a divergence free velocity $\Vec{u}$, and following \cite{SheYan2010} (for example) the momentum equation of the mixture is 
\begin{equation}\label{EqMoment}
\begin{aligned}
&\displaystyle \sqrt{\rho} \partial_t \left( \sqrt{\rho} \Vec{u}\right) 
+ \rho \left( \Vec{u} \cdot \nabla\right) \Vec{u} 
+ \displaystyle\frac{1}{2} \nabla \cdot \left( \rho \Vec{u} \right) \Vec{u} - \nabla \cdot  \left( \mu \Du\right) + \nabla p 
\\
& \qquad\qquad\qquad\qquad\qquad\qquad\qquad\qquad\qquad\qquad
= - \sigma  \nabla \cdot \Vec{T}(\phi) + \Vec{G}(\phi)
\end{aligned}
\end{equation}
where $p$ is the fluids pressure  and $\Du $ is  the shear rate  defined by
$$
\Du = \frac{1}{2}(\nabla\Vec{u} + \nabla\Vec{u}^t).
$$
As for the forces involved, $\Vec{G}$  represents external volumetric force (such as gravity), and the extraneous elastic stress induced by the surface tension is defined as
\begin{equation*}
\Vec{T}(\phi) = \sigma\nabla \phi \otimes \nabla \phi,
\end{equation*}
where $\sigma$ represents the density of the mixing energy. 
Before going any further, based on the 
potential of the free energy~\eqref{Appr-F} and~\eqref{EqAC}
we can replace the surface tension term 
\begin{equation*}
\begin{aligned}
\nabla \cdot \Vec{T}(\phi) =  
\sigma\nabla \cdot \left( \nabla\phi \otimes \nabla\phi \right)  
&=\sigma\left( \Delta \phi - f(\phi) \right) \nabla \phi + \sigma\nabla \left(\frac{1}{2}|\nabla \phi|^2 + F(\phi)\right) \\ 
&= \frac{\sigma}{\gamma}\left( \partial\phi_t  + \Vec{u} \cdot \nabla \phi  \right) \nabla \phi - \nabla \hat p,
\end{aligned}
\end{equation*}
where $\hat p = \displaystyle \sigma(\frac{1}{2}|\nabla \phi|^2 + F(\phi))$ is of the same nature as a pressure. As in \cite{SheYan2010}, we introduce a new pressure, $p + \hat p$, still denoted $p$. Then we can rewrite the momentum \eqref{EqMoment},
and the system describing the behaviour of the mixture is 
\begin{equation}\label{EqNS0}
\left\{
\begin{aligned}
&\displaystyle \sqrt{\rho} \partial_t \left( \sqrt{\rho} \Vec{u}\right) 
+ \rho \left( \Vec{u} \cdot \nabla\right) \Vec{u} 
+ \displaystyle\frac{1}{2} \nabla \cdot \left( \rho \Vec{u} \right) \Vec{u} - \nabla \cdot  \left( \mu \Du\right) + \nabla p 
\\
& \quad\quad\qquad\qquad\qquad\qquad\qquad\qquad
=
- \frac{\sigma}{\gamma}\left( \partial_t\phi  + \Vec{u} \cdot \nabla \phi  \right) \nabla \phi
+ \Vec{G}(\phi),
\\
&\nabla\cdot\Vec{u} = 0,
\\
&\partial_t \phi + \left(\Vec{u} \cdot \nabla \right) \phi = 
\gamma\left( \Delta \phi - \frac{1}{\eta^2} \phi
\left( \phi^2 -1\right) \right).  
\end{aligned}
\right.
\end{equation}

As this system is nonlinear and strongly coupled, the numerical approximation of solutions is challenging. 
Existence of solutions for this system is known (see~\cite{LiDinHua2016} and the references therein), and semi-explicit numerical strategies  are documented (see, for example, the approach proposed in~\cite{SheYan2010}). 
Such explicit schemes, if they are generally perceived as efficient (at least from a computational standpoint), frequently lacks in accuracy and require important constraint on the time step to recover the loss in precision.  

From empirical observations, for a given precision, implicit scheme allows larger time step when compared to explicit methods. This makes implicit methods attractive as they frequently lead to less computational effort for a fixed time horizon (Table~\ref{Le_tablo} for example).  
Aiming for a more accurate while flexible approach, this paper addresses some theoretical aspects related to the numerical approximation of \eqref{EqNS0} using implicit time discretization.

Based on an original fixed point (the iterates are defined by~\eqref{AlgAC}--\eqref{AlgNSDivu}), this paper presents 
the existence and uniqueness of the solution for a semi-discrete formulation of \eqref{EqNS0}. The result is based on the study of the well-posedness (Theorem~\ref{Th-existnce}), uniform boundness (Theorems~\ref{TheoBorneUPhi}) and the strong convergence (Theorem~\ref{convergenceH2}) of the fixed point problem.
Finally, as the fixed point problem produces bounded phase fields, a maximum principle is obtained for the limit problem \eqref{EqSysSD}. 

%
\section{Preliminaries}\label{prelim}
\subsection{Spaces}
In what follows, $\Omega$  is a bounded open convex of class $C^{2,1}$ of $\mathbb R^d,~ d=$  2 or 3, and $\partial \Omega$ denotes the boundary of $\Omega$.
$L^p(\Omega)$ represent the usual set of $p-th$ power measurable functions, and $\left(L^p(\Omega)\right)^d = \Vec{L}^p(\Omega)$. The scalar product defined on $L^2(\Omega)$ or $\Vec{L}^2(\Omega)$ is denoted (without distinction) by  $\prodL2{\cdot,\cdot}$ and its norm $\|\cdot\|$. The space $L^2_0(\Omega)$ is defined as
$$
L^2_0(\Omega) =\left \{ q \in L^2(\Omega);\ \displaystyle \int_{\Omega} q(\Vec{x}) d\Vec{x}  = 0  \right \}. $$

The Sobolev spaces, denoted $W^{m,p}(\Omega)$ (we denote $\Vec{W}^{m,p}(\Omega)= \left(W^{m,p}(\Omega)\right)^d$), with $p\in [1,+\infty)$ and $m$ integer, is defined as 
$$
W^{m,p}(\Omega) = \left\{u\in L^p(\Omega)\ : D^{\alpha} u \in L^p(\Omega)\ \forall |\alpha|\le m\right\}
$$
where $\alpha$ is a multi-index in $\mathbb{N}^d$. These spaces are equipped with the norm $\|\cdot\|_{m,p}$ and semi-norm $|\cdot|_{m,p}$.

The spaces  $W^{s,2}(\Omega)$ and $\Vec{W}^{s,2}(\Omega)$, $s\in \mathbb{R}$, are denoted $H^s(\Omega)$ and $\Vec{H}^s(\Omega)$ respectively. Their norm are denoted $\|\cdot\|_s$ and semi-norm $|\cdot|_s$. Without distinction for the dimension, we denote the duality pairing between $H^1_0(\Omega)$ and its dual $H^{-1}(\Omega)$ (or between $\Vec{H}^1_0(\Omega)$ and $\Vec{H}^{-1}(\Omega)$)) by $\prodD{\cdot, \cdot}$. More generally, for a space $V$ and $V'$ its dual, we denote the duality pairing by $\prodD{\cdot, \cdot}_{V',V}$.



For a fixed positive real number $T$ (representing the final time) and a  separable Banach space $E$ equipped with the norm $\|\cdot\|_{E}$,  we denote by $\mathcal{C}^0(0,T;E)$ the space of continuous functions from $[0,T]$ with values in $E$. For a positive integer $p$, we introduce the Bochner spaces, 
$$
L^p(0,T;E) = \left\{
u: (0,T) \mapsto E :\ \left(\int_0^T \|u(\tau)\|_E^p\,d\tau\right)^{1/p} < \infty\right\}
$$
We refer to \cite{Ada2003}, \cite[Chapter 2]{BoyFab2013} for details concerning those spaces.
Let us remind the  following interpolation  inequality for Sobolev spaces (see \cite[Exercise II.3.12]{Gal1994} for instance): 

\begin{lemma} Assume $\Omega$ locally Lipschitz and let 
$$r  \in \left[q, \frac{qd}{d-q}\right], \;\; \mbox{if} \; \;q \in [1,d), \qquad 
\mbox{and}\qquad r  \in [q, +\infty) \;\; \mbox{if} \;\; q \ge d.
$$
The following inequality holds  for all $u \in W^{1,q}(\Omega)$
\begin{equation}\label{Galdi}
\| u \|_{L^r} \le {\Com} \| u\|^{1-s}_{L^q} \, \|u \|^{s}_{1,q} \qquad \mbox{where}\qquad 
s = \frac{d(r-q)}{r q}.
\end{equation}
\end{lemma}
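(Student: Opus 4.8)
The plan is to obtain \eqref{Galdi} by combining two classical ingredients: the Sobolev embedding theorem and the elementary convexity (Lyapunov) interpolation inequality for Lebesgue norms. Throughout, $C$ denotes a generic constant depending only on $\Omega$, $d$, $q$ and $r$ (this is the constant $\Com$ of the statement), allowed to change from line to line; the hypothesis that $\Omega$ is bounded and locally Lipschitz enters only through the continuity of the Sobolev embedding.

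I would first treat the main case $q\in[1,d)$. Here the Sobolev embedding reads $W^{1,q}(\Omega)\hookrightarrow L^{q^{*}}(\Omega)$ with the critical exponent $q^{*}=qd/(d-q)$, that is $\|u\|_{L^{q^{*}}}\le C\,\|u\|_{1,q}$. Since $q\le r\le q^{*}$, there is a unique $\theta\in[0,1]$ such that $1/r=(1-\theta)/q+\theta/q^{*}$, and H\"older's inequality gives the interpolation bound $\|u\|_{L^{r}}\le \|u\|_{L^{q}}^{1-\theta}\,\|u\|_{L^{q^{*}}}^{\theta}$. Substituting the embedding yields $\|u\|_{L^{r}}\le C\,\|u\|_{L^{q}}^{1-\theta}\,\|u\|_{1,q}^{\theta}$, which is exactly \eqref{Galdi} provided $\theta=s$. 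This last identity is a one-line check: inserting $1/q^{*}=1/q-1/d$ into the defining relation for $\theta$ collapses it to $1/r=1/q-\theta/d$, so $\theta=d(1/q-1/r)=d(r-q)/(rq)=s$.

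For the complementary case $q\ge d$ the strategy is the same in spirit, but the endpoint exponent changes: now $W^{1,q}(\Omega)\hookrightarrow L^{m}(\Omega)$ for every finite $m$ (and into $C^{0}(\overline{\Omega})$ when $q>d$), so one interpolates between $L^{q}$ and a conveniently large finite $L^{m}$ and absorbs the embedding constant as above. The delicate point, which I expect to be the main obstacle, is the borderline exponent $q=d$: there, plain Lebesgue interpolation between $L^{d}$ and a finite $L^{m}$ only produces an exponent strictly larger than the sharp value $s=1-d/r$ of \eqref{Galdi}, and merely decreases to $s$ as $m\to\infty$ without ever reaching it, because $W^{1,d}\not\hookrightarrow L^{\infty}$. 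This borderline estimate is precisely of Ladyzhenskaya type --- for instance $\|u\|_{L^{4}}\le C\,\|u\|_{L^{2}}^{1/2}\,\|u\|_{1,2}^{1/2}$ when $d=2$, $r=4$ --- and recovering the sharp exponent there cannot be done by H\"older interpolation alone; it requires the genuine Gagliardo--Nirenberg inequality, as indicated in \cite[Exercise II.3.12]{Gal1994}. That the correct exponent is nonetheless the uniform $s=d(r-q)/(rq)$ is confirmed by the scaling $x\mapsto\lambda x$ on $\mathbb{R}^{d}$, under which only this value makes the homogeneous inequality dilation-invariant, in agreement with the computation of the case $q<d$. The remaining verifications --- continuity of the embeddings, the H\"older step and the bookkeeping of the constant --- are routine.
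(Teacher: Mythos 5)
The paper does not actually prove this lemma: it is recalled verbatim from the literature (Galdi, Exercise II.3.12) and used as a black box, so there is no internal proof to compare against. Your argument for the subcritical range $q\in[1,d)$ is complete and correct, and it is the standard one: the embedding $W^{1,q}(\Omega)\hookrightarrow L^{q^{*}}(\Omega)$ on a bounded Lipschitz domain, Lyapunov interpolation of $L^{r}$ between $L^{q}$ and $L^{q^{*}}$, and the identity $\theta=s$ obtained from $1/q^{*}=1/q-1/d$.

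The gap is in the range $q\ge d$, and it is larger than you acknowledge. You correctly flag $q=d$ as irreducible to embedding-plus-H\"older, but you assert that for $q>d$ one can ``interpolate between $L^{q}$ and a conveniently large finite $L^{m}$ and absorb the embedding constant as above.'' This fails for the same reason as at $q=d$: matching the exponent requires $1/m=1/q-1/d$, which has no admissible solution $m\in(q,\infty]$ once $q>d$; the best one can do is $m=\infty$, giving $\|u\|_{L^{r}}\le C\|u\|_{L^{q}}^{q/r}\|u\|_{1,q}^{1-q/r}$ with exponent $\theta=1-q/r>d(1/q-1/r)=s$ whenever $r>q>d$. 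Since $\|u\|_{L^{q}}\le\|u\|_{1,q}$, putting the larger exponent on the stronger norm yields a strictly \emph{weaker} inequality, which cannot be upgraded to \eqref{Galdi}; the scaling argument you invoke confirms which exponent is sharp but proves nothing. So the entire range $q\ge d$ requires the genuine Gagliardo--Nirenberg mechanism (Nirenberg's slicing argument, or the Ladyzhenskaya-type inequalities), which your proposal defers to the reference rather than establishes. This matters here because the paper invokes the lemma precisely in the borderline configuration $q=d=2$ (the $L^{3}$ and $L^{4}$ interpolations with exponents $(6-d)/6$ and $d/6$ in the regularity and convergence estimates), so the case you leave unproved is one the paper actually uses.
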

The assumption on $\Omega$ are sufficient to verify the inf–-sup or Ladyzhenskaya-–Babu\u{s}ka–-Brezzi (LBB) condition (see~\cite{Bab1972,BofBreFor2013, BoyFab2013}), that is, there exists a positive constant $\kappa>0$ depending only on $\Omega$, 
such that 
$$
- \inf_{q \in L_0^{2}(\Omega)} \sup_{\Vec{v} \in \HzVoltroisd}  
\frac{\left( q, \nabla \cdot \Vec{v}  \right)}{ \|q \|\| \Vec{v} \|_{1}} \geq \kappa. \label{LBB}
$$

Most proofs will rely on combination of Sobolev embedding, Poincar\'e, Korn, Poincar\'e-Wirtinger, Cauchy-Schwarz, Holder and Young inequalities (see~\cite{Ada2003, Bre1974} for details on these concept). We will denote by \textbf{\SPCSHY} such combination.

\subsection{Equalities and constants}\label{sec_Constante}
In order to simplify the proofs, we introduce the following equalities, for arbitrary fields  $\Vec{w}_0,\Vec{w}_1\in\mathbb{R}^d$ and $\phi_0,\phi_1\in\mathbb{R}$ 
\begin{equation} \label{Cauchy_laplace}
\begin{split}
&\int_\Omega  ( \mu(\phi_0) \, \Vec{D}(\Vec{w}_0)   
-\, \mu(\phi_1) \, \Vec{D}(\Vec{w}_1) )
: \Vec{D} \left(\Vec{w}_0 - \Vec{w}_1 \right)
=
\\ &\qquad
\int_\Omega  \mu(\phi_0) \left| \Vec{D} \left(\Vec{w}_0 - \Vec{w}_1 \right) \right|^2
+ \frac{\delta\mu}{2}\int_\Omega  \left( \phi_0 -  \phi_1 \right)  \, \Vec{D}  (\Vec{w}_1)  :
\Vec{D}  \left(\Vec{w}_0 - \Vec{w}_1 \right).
\end{split}
\end{equation}
%
%
If $\nabla\cdot \Vec{w}_0 =  \nabla\cdot \Vec{w}_1 =0 $ then 
\begin{equation}\label{rhoUVV}
\begin{split}
\int_\Omega \rho(\phi_0)(\Vec{w}_0 \cdot \nabla)\Vec{w}_1\cdot\Vec{w}_1
=  
-\frac{1}{2}\int_\Omega \nabla\cdot(\rho(\phi_0)\Vec{w}_0)\Vec{w}_1\cdot\Vec{w}_1
\end{split}
\end{equation}
\begin{equation}\label{UgradPhiPhi}
\begin{split}
\int_\Omega (\Vec{w}_0 \cdot \nabla \phi_{0})\phi_{1}
=  
-\int_\Omega (\Vec{w}_0\cdot \nabla\phi_{1})\phi_0, \qquad \int_\Omega (\Vec{w}_0 \cdot \nabla \phi_{0})\phi_{0}=0
\end{split}
\end{equation}
\begin{equation}\label{UgradPhiPhi1}
\begin{split}
\int_\Omega ( \Vec{w}_0 \cdot \nabla \phi_{0} -  \Vec{w}_1 \cdot \nabla \phi_{1} ) \left(\phi_{0}  - \phi_{1}\right)
&=  
\int_\Omega \left(\Vec{w}_0 - \Vec{w}_1 \right) \cdot \nabla \phi_{1}
\left(\phi_{0}  - \phi_{1}\right)
\\
&=  
-\int_\Omega \left(\Vec{w}_0 - \Vec{w}_1 \right)\cdot
\nabla \left(\phi_{0}  - \phi_{1}\right)   \phi_{1}.
\end{split}
\end{equation}
Some algebraic identities frequently used in this paper, $\forall\, a,b,c,d\in \mathbb{R}$
\begin{equation} \label{Cauchy_convect}\begin{aligned}
a^2 b - c^2 d &= a^2 (b-d)+d(a+b)(a-b) \\ ab-cd &= a(b-d)+(a-c)d = (a-c)b+(b-d)c.
\end{aligned}
\end{equation}
In this work, the dependency of constants with respect to physical parameters plays an important role, in particular the behaviour of constants in relation with the parameters $\mu_I$ and $\mu_S$ defining $\mu$ the viscosity. $\Com$ will denote generic constants depending \textit{at most} on $\Omega$ and its boundary (for example, the Poincar\'e and the Korn  constant). $\Kom$ will denote generic constants depending on any physical data. $\Comou$ will denote generic constants that could depend on the physical parameter and state variables \textit{with the exception} of $\mu$, $\mu_I$ and $\mu_S$. $\Komou$ will denote constant depending on all the physical values with the addition that
$$\lim\limits_{\mu_I\to\infty}\Komou = 0$$
typically $\Komou= \mathscr{C}\mu_I^{-\alpha}$ with $\alpha >0$.
Finally $\Komix$ will denote special constant of the form
\begin{equation}\label{Def_K}
\Komix = \mathscr{C} + \mathscr{K}.
\end{equation}
Lastly, when there is no possible confusion, we will make no distinction between the various values of $\Com$, $\Comou$, $\Komou$, $\Komix$ and $K$ when manipulating expressions.
\subsection{Existence and regularity results}
Essential element of this work, we recall a 
lemma for elliptic problem with Neumann boundary conditions 
(for instance \cite[Chap. 3]{Gri1985}  or~\cite{LioMag1968, Pao1992,QuaVal1999}). 
Once again we underline that the assumptions previously made on $\Omega$ and its boundary makes it possible to apply such regularity lemma. 
\begin{lemma}\label{Lemma_Elliptique}
Let $a(\cdot), \Vec{b}(\cdot)$ and $c(\cdot)$ be three functions such that $a,c \in  L^\infty(\Omega) $ and $\Vec{b} \in \mathbf{H^1_0}(\Omega)$. Assume 
$a(\Vec{x}) \ge a_0 > 0, \, c(\Vec{x})\ge c_0 >0$, $\nabla\cdot \Vec{b} = 0$ and $f\in \HVolund{r}, \, r \ge 0$ . Then the following problem 
\begin{equation}
\left\{
\begin{array}{rcl}
-a\Delta\phi + \Vec{b}\cdot\nabla\phi  + c\phi&=& f  \qquad \mbox{in} \qquad \Omega \\ 
\partial_\Vec{n} \phi &=& 0 \qquad \mbox{on} \qquad \partial \Omega
\end{array}
\right.
\end{equation}
admits a unique solution which satisfies 
\begin{align}
\| \phi \|_{2+r} & \le C(\Omega,a_0,c_0,\Vec{b},r) \left( \| f \|_{r} + \| \phi\|_{1+r}  \right). \label{Est-Ellip}
\end{align}
where $C(\Omega,a_0,c_0,\Vec{b},r)$ is a constant depending on $\Omega, a_0,\, \Vec{b}, c_0$ and $r$.
\end{lemma}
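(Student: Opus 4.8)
The plan is to construct the weak solution by Lax--Milgram and then lift its regularity by a bootstrap argument. I would start from the variational formulation: find $\phi\in\HVolund{1}$ such that
$$
\prodL2{a\nabla\phi,\nabla\psi}+\prodL2{\Vec{b}\cdot\nabla\phi,\psi}+\prodL2{c\phi,\psi}=\prodL2{f,\psi}\qquad\forall\,\psi\in\HVolund{1},
$$
the homogeneous Neumann condition being natural and hence absorbed here. Writing $B(\phi,\psi)$ for the left-hand side, continuity of $B$ on $\HVolund{1}\times\HVolund{1}$ follows from a \SPCSHY: the zeroth-order term is controlled by $\|c\|_{\Lund{\infty}}$, and for the convective term I would apply H\"older with exponents $(3,2,6)$ together with the embeddings $\HzVoltroisd\hookrightarrow\Ltroisd{3}$ and $\HVolund{1}\hookrightarrow\Lund{6}$ (both valid for $d\le 3$), giving $|\prodL2{\Vec{b}\cdot\nabla\phi,\psi}|\le\Com\|\Vec{b}\|_{1}\,|\phi|_{1}\,\|\psi\|_{1}$.

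The decisive point is coercivity on the \emph{full} space $\HVolund{1}$ (not merely on $\HzVolund$, since the Neumann problem keeps the constants). Here I would invoke the structural identity \eqref{UgradPhiPhi}: because $\nabla\cdot\Vec{b}=0$ and $\Vec{b}\in\HzVoltroisd$ vanishes on $\partial\Omega$, the convective term is skew and drops out on the diagonal, $\prodL2{\Vec{b}\cdot\nabla\phi,\phi}=0$. Hence
$$
B(\phi,\phi)=\prodL2{a\nabla\phi,\nabla\phi}+\prodL2{c\phi,\phi}\ge a_0|\phi|_1^2+c_0\|\phi\|^2\ge\min(a_0,c_0)\,\|\phi\|_1^2,
$$
and Lax--Milgram delivers a unique $\phi\in\HVolund{1}$ satisfying the a priori bound $\|\phi\|_1\le\min(a_0,c_0)^{-1}\|f\|$. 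This settles existence and uniqueness.

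For the estimate \eqref{Est-Ellip} I would recast the equation as a Neumann problem for the principal part, $-a\Delta\phi=g:=f-\Vec{b}\cdot\nabla\phi-c\phi$, and combine the $C^{2,1}$ regularity of $\Omega$ with the elliptic lifting theory cited above. The base case $r=0$ carries the real work: with only $\Vec{b}\in\HzVoltroisd\hookrightarrow\Ltroisd{6}$ the product $\Vec{b}\cdot\nabla\phi$ lies a priori only in $\Lund{3/2}$ (H\"older, as $\nabla\phi\in\Ltroisd{2}$), so $g\in\Lund{3/2}$ and one cannot pass directly to $\HVolund{2}$. I would therefore bootstrap: the $W^{2,3/2}$ estimate yields $\nabla\phi\in\Ltroisd{3}$ (since $W^{1,3/2}(\Omega)\hookrightarrow\Ltroisd{3}$ for $d\le 3$), whence $\Vec{b}\cdot\nabla\phi\in\Lund{2}$, so $g\in\Lund{2}$ and the $\HVolund{2}$ bound follows. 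The cases $r>0$ are then reached by iterating the lifting: once $\phi\in\HVolund{2}$ one controls $\nabla\phi$ in stronger norms, re-estimates $g$ in $\HVolund{r}$, and recovers at each step the claimed bound with constant depending on $\Omega$, $a_0$, $c_0$, $\Vec{b}$ and $r$.

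The hard part will be exactly this convective term. Its low summability is what forces the intermediate $W^{2,3/2}$ step in the base case, and in the inductive step the limited smoothness of the coefficients ($a,c\in\Lund{\infty}$, $\Vec{b}\in\HzVoltroisd$) constrains how the products $\Vec{b}\cdot\nabla\phi$ and $c\phi$ may be estimated in $\HVolund{r}$; tracking the dependence of the constant on $\|\Vec{b}\|_{1}$ (rather than on $\Vec{b}$ abstractly) through these products is the bookkeeping that requires the most care.
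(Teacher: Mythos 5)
The paper does not actually prove this lemma: it is recalled as a known existence/regularity result for elliptic Neumann problems, with a pointer to Grisvard, Lions--Magenes, Pao and Quarteroni--Valli. Your Lax--Milgram plus bootstrap outline therefore supplies an argument where the paper supplies only a citation, and the skeleton is the standard, correct one: skew-symmetry of the convective term (from $\nabla\cdot\Vec{b}=0$ together with $\Vec{b}|_{\partial\Omega}=0$) gives coercivity on all of $H^1(\Omega)$ thanks to the zeroth-order term $c\ge c_0>0$, and the intermediate $W^{2,3/2}$ step to upgrade $\Vec{b}\cdot\nabla\phi$ from $L^{3/2}$ to $L^2$ is exactly the right way to handle the low summability of the convection when $\Vec{b}$ is only $H^1$.

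There is, however, one genuine gap if this is read as a proof of the lemma as literally stated. With $a$ only in $L^\infty(\Omega)$, the bilinear form $\left(a\nabla\phi,\nabla\psi\right)$ is the weak formulation of the divergence-form operator $-\nabla\cdot\left(a\nabla\phi\right)$, which differs from the stated non-divergence operator $-a\Delta\phi$ by $\nabla a\cdot\nabla\phi$; your Lax--Milgram step therefore solves a different equation unless $a$ is at least Lipschitz, and the subsequent $W^{2,p}$ and $H^{2+r}$ lifting of $-a\Delta\phi=g$ fails for general merely measurable $a$ (Calder\'on--Zygmund theory requires $a$ continuous or VMO; dividing the equation by $a$ instead destroys the divergence-free structure of $\Vec{b}/a$ on which your coercivity relies). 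In the paper's actual applications (e.g.\ \eqref{exist0-fi} and \eqref{Laplace_Psi}) the Laplacian coefficient is the constant $\gamma\Delta t$ and only the zeroth-order coefficient is rough, so your argument does go through there, but you should either assume $a$ constant (or Lipschitz) or state that you are treating the divergence-form problem. A smaller point: for $r>0$ the hypothesis $c\in L^\infty(\Omega)$ does not allow you to place $c\phi$ or $\Vec{b}\cdot\nabla\phi$ in $H^r(\Omega)$, which is presumably why the estimate \eqref{Est-Ellip} keeps $\|\phi\|_{1+r}$ on the right-hand side rather than closing entirely in terms of $\|f\|_r$; your final remark acknowledges this bookkeeping but does not resolve it, whereas the cited references impose the smoothness on the coefficients needed for each $r$.
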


Of course, \eqref{EqNS0}
is a well defined system for 
$(\Vec{u}, p,\phi)$ once it is completed with boundary and initial conditions. For the sake of simplicity, we  consider homogeneous Dirichlet boundary
conditions for $\Vec{u}$ and homogeneous Neumann condition on $\phi$
\begin{alignat}{2}
\left( \Vec{u}, \partial_\Vec{n} \phi \right) &= \left( \vzero, 0\right) 
\qquad &\text{on }\,  \partial \Omega   \label{BC} \\ 
\left( \Vec{u}, \phi \right)_{t=0} &= \left( \Vec{u}^0, \phi^0\right),\quad\|\phi^0\|_{L^{\infty}} = 1 
&\text{in }\,    \Omega   \label{IC}.
\end{alignat}
%
%

The usual hypothesis leading to a Boussinesq-type approximation (such as having a ratio of densities near 1) have no effects on the results presented here. Therefore a more general case is considered with the following assumptions.
\begin{assumption}\label{HypoG}
The function $ \Vec{G}:\; \mathbb{R} \rightarrow \mathbb{R}^d $\; is a $ C^1(\mathbb{R})$ function such that  
\begin{equation*}
\Vec{G}(0) = \vzero, \qquad \| \Vec{G}'\|_{L^\infty} \le 1 \qquad \mbox{and} \quad \forall s \in \mathbb{R},\qquad |\Vec{G}(s)| \le s. 
\end{equation*}
\end{assumption}
%
%
%
The existence of a solution to~\eqref{EqNS0}, \eqref{BC}--\eqref{IC} is proved provided basic compatibility conditions are satisfied, namely 
\begin{assumption}\label{ReguInitial} 
The data $\left(\Vec{u}^0,\phi^0\right)$ satisfies the regularity conditions 
\begin{equation*}
\Vec{u}^0 \in \HzVoltroisd \cap \HVoltroisd{2} \; \mbox{and} \; 
\phi^0 \in  \HVolund{3}
\end{equation*}
and the compatibility condition 
\begin{align*}
-\nabla \left(\mu_0 \Vec{D}(\Vec{u}^0)\right) + \nabla p_0 + \sigma \nabla \cdot \Vec{T}(\phi^0) = \Vec{G}(\phi^0), \label{Comp-Cond} 
\end{align*}
for some $p_0 \in H^1(\Omega)$.
\end{assumption}
Finally, we have an existence and uniqueness result for the continuous problem 
thanks to \cite{JiaTan2009,ZhaGuoHua2011,LiDinHua2016}:
\begin{theorem}\label{Existence}
Under Assumptions~\ref{HypoG}--\ref{ReguInitial} 
there exists  $T^*>0$  and a unique  solution $\left(\Vec{u},p,\phi\right)$ to the problem 
\eqref{EqNS0}, \eqref{BC}--\eqref{IC} such that
\begin{equation}\label{Reg-Sol}
\begin{aligned}
& \Vec{u} \in L^{\infty}\left(0,T^*; \HzVoltroisd \cap \HVoltroisd{2} \right)  
\cap  L^2\left(0,T^*;\Vec{W}^{2,q}(\Omega) \right),
\; \nabla \cdot \Vec{u}|_\Omega =0 
\\
& p \in L^\infty \left( 0,T^*; H^1(\Omega)\right)  \cap  L^2\left(0,T^*;W^{1,q}(\Omega) \right),
\\
&\phi \in L^{\infty}\left(0,T^*; H^3(\Omega) \right)  \cap L^2\left(0,T^*;H^{4}(\Omega) \right)
\\
\end{aligned}
\end{equation}
for some $q, \; d < q <  \displaystyle \frac{2d}{d-2}$.
\end{theorem}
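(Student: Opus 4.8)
The plan is to establish this local-in-time result along the lines of the cited works \cite{JiaTan2009, ZhaGuoHua2011, LiDinHua2016}, reconstructing it through the now-classical combination of a linearized iteration, uniform a priori estimates at the regularity level announced in \eqref{Reg-Sol}, a compactness argument to pass to the limit, and an energy estimate on the difference of two solutions for uniqueness. Since the surface-tension term has already been absorbed into the modified pressure in \eqref{EqNS0}, the coupling between the two blocks appears only through the transport $\Vec{u}\cdot\nabla\phi$ in the phase equation and through the forcing $-\frac{\sigma}{\gamma}(\partial_t\phi + \Vec{u}\cdot\nabla\phi)\nabla\phi + \Vec{G}(\phi)$ in the momentum equation, together with the dependence of $\rho(\phi)$ and $\mu(\phi)$ on the phase; this is the structure I would exploit throughout.

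First I would set up a fixed-point (or Galerkin) scheme: given a velocity field, solve the Allen--Cahn equation for $\phi$, then feed $\rho(\phi)$, $\mu(\phi)$ and the resulting forcing into the Navier--Stokes block to update $(\Vec{u},p)$. For the phase equation I would use the parabolic smoothing that upgrades $H^3$ to $L^2(H^4)$, the underlying spatial gain being exactly the elliptic regularity of Lemma~\ref{Lemma_Elliptique} (with $a=\gamma$, $\Vec{b}=\Vec{u}$, the cubic term $f(\phi)$ treated as data). For the velocity block I would invoke Stokes regularity together with the LBB condition to recover $\Vec{u}\in\Vec{H}^2$ and $p\in H^1$, the $\Vec{W}^{2,q}/W^{1,q}$ improvement with $d<q<\frac{2d}{d-2}$ coming from the $L^2$-in-time parabolic estimate.

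The heart of the argument is the energy hierarchy. The basic estimate comes from testing the momentum equation with $\Vec{u}$ and the phase equation with a suitable multiple of $(\partial_t\phi + \Vec{u}\cdot\nabla\phi)$ so that the troublesome coupling terms cancel, yielding control of $\|\Vec{u}\|_1$ and the interfacial energy; the skew-symmetry identities \eqref{rhoUVV}--\eqref{UgradPhiPhi} are precisely what make the convective contributions vanish. To reach the claimed regularity I would differentiate the system in time and in space and test with higher-order operators (e.g.\ $-\Delta\Vec{u}$ for the velocity and a fourth-order quantity for $\phi$), bounding every nonlinear product by the \SPCSHY\ and controlling the cubic nonlinearity by the $L^\infty$ bound on $\phi$. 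Assembling these into a single differential inequality of the form $\frac{d}{dt}\mathcal{E}\le C(\mathcal{E})\,\mathcal{E}$ produces, via a continuation argument, a maximal existence time $T^*>0$.

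The main obstacle, and the reason the result is only local, lies in closing the top-order estimates in three dimensions: the products generated by the variable density and by the surface-tension forcing $(\Vec{u}\cdot\nabla\phi)\nabla\phi$ sit at the borderline of the Sobolev embeddings, so these nonlinear terms cannot be absorbed into the dissipation and instead feed a superlinear right-hand side, which is what limits $T^*$. Uniqueness then follows by writing the equations satisfied by the difference of two solutions, linearizing the differences of $\mu(\phi)\Vec{D}(\Vec{u})$ and of the transport terms via identities \eqref{Cauchy_laplace} and \eqref{UgradPhiPhi1}, and applying Gronwall; the regularity recorded in \eqref{Reg-Sol} is exactly what is needed to make those difference estimates close.
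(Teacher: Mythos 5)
The first thing to note is that the paper itself does not prove Theorem~\ref{Existence}: it is imported wholesale from the cited references (\cite{JiaTan2009,ZhaGuoHua2011,LiDinHua2016}), with no argument given beyond the citation. So there is no internal proof to compare you against; your outline has to be judged on its own. On that basis, your strategy is the right one and matches what those references actually do: linearized iteration or Galerkin approximation, an energy identity obtained by testing the momentum equation with $\Vec{u}$ and the phase equation with (a multiple of) $\partial_t\phi+\Vec{u}\cdot\nabla\phi$ so that the surface-tension forcing cancels, higher-order estimates closed by Sobolev interpolation, a continuation argument giving a maximal $T^*$, and a Gronwall estimate on differences for uniqueness. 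Your identification of where the argument is only local (borderline products from the variable density and from $(\Vec{u}\cdot\nabla\phi)\nabla\phi$ in $d=3$) is also correct.

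That said, as submitted this is a roadmap rather than a proof: every analytically hard step is announced but not performed. In particular, (i) the higher-order estimates that deliver $\Vec{u}\in L^\infty(H^2)$, $\phi\in L^\infty(H^3)\cap L^2(H^4)$ and $p\in L^\infty(H^1)$ are exactly where the work lies, and "differentiate and test with higher-order operators" does not by itself show that the commutators with $\rho(\phi)$ and $\mu(\phi)$, and the cubic nonlinearity, can be absorbed; (ii) the $L^2(0,T^*;\Vec{W}^{2,q})$ statement with the specific window $d<q<\frac{2d}{d-2}$ is not a routine consequence of "the $L^2$-in-time parabolic estimate" — it requires a maximal-regularity or bootstrap argument for the Stokes operator with variable viscosity, and you should at least say why that window (and not a larger one) is attainable; (iii) the compatibility condition in Assumption~\ref{ReguInitial} is never used in your sketch, whereas it is precisely what guarantees $\partial_t\Vec{u}(0)\in\Ltroisd{2}$ and lets the time-differentiated estimate start. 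None of these is a wrong turn, but each is a genuine gap between your plan and a proof; if the intent is simply to record that the theorem is known, the honest course is to do what the paper does and cite the references rather than sketch a proof that is not carried out.
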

The rest of this work relies on the regularity of $\phi$ and $\Vec{u}$ given by Theorem~\ref{Existence}. 
We emphasize that the regularity imposed by Assumption~\ref{ReguInitial}
is not excessive. The proofs of most of the results presented here hinges on these assumptions (and consequently on \eqref{Reg-Sol}). Nevertheless, we do not exclude that results similar to those presented here could be obtained with less regular assumptions on the initial values.

\section{A fully implicit semi-discrete formulation}\label{Sec-Pb-Initial}

For the sake of simplicity and clarity, a first order approximation of the time derivative was chosen for both the Navier-Stokes and advection-diffusion equations. 
Different treatment of \eqref{EqNS0} could be considered, 
from a backward Euler (BDF1) to a forward Euler scheme. 
The resulting system contains two important difficulties: it is \textit{non linear} and \textit{strongly coupled} in $\Vec{u}$ and $\phi$.

Excluding the time derivative, the terms in~\eqref{EqNS0} can be treated: implicitly (
leading to various fixed-point algorithm), semi-explicitly 
(using known values of the state variables 
, see \cite{SheYan2010} for example) or totally explicitly (in which case no system has to be solved
). Of course it is only in the implicit case that we have a proper backward Euler scheme with, \textit{a priori},  all its properties.

We emphasizes that for all these strategies (implicit, semi-explicit and explicit), the resulting system  will still be a coupled system. 
Different approach can be used to deal with the coupling of $\Vec{u}$ and $\phi$, a \textit{strongly coupled approach} would consist in solving both equation as a system (possibly non linear). A \textit{weakly coupled approach} would consist in solving each equation 
once at each time step, expressing the coupling terms in some explicit way (for example by using a Richardson extrapolation).

Any weakly coupled strategy reduces the work load at each time step. However, as for explicit or semi-explicit time scheme, this leads inevitably to conditional stability and certainly imposes conditions on the time step length offsetting any numerical advantages.
%
A strongly coupled approach avoiding these inconvenient is preferable. Such strategies imply to solve a system in $(\Vec{u},\phi)$ using a fixed-point approach. An implicit approach for the non linear terms has been retained as well, leading to a \textit{fully implicit approach}. 

Using a uniform  time-step $\Delta t >0$ and denoting 
$$t^n = n\Delta t,\quad \Vec{u}^{n} = \Vec{u}(t_{n}, \Vec{x}),\quad p^n = p(t_n,\Vec{x}),\quad \phi^n = \phi(t_n,\Vec{x}),$$
the time discretization of  \eqref{EqNS0} 
result in a sequence of nonlinear coupled (strongly) problems of the form
\begin{equation}\label{EqSysSD}
\left\{
\begin{aligned}
&\displaystyle \frac{\phi^{n+1}-\phi^n}{\Delta t} + \Vec{u}^{n+1} \cdot \nabla \phi^{n+1} - \gamma\left(\Delta\phi^{n+1} - \frac{\phi^{n+1}}{\eta^2}((\phi^{n+1})^2 -1)\right) = 0
\\ &
\frac{\rho^{n+1}\Vec{u}^{n+1} - \sqrt{\rho^{n+1}\rho^{n}}\Vec{u}^{n}}{\Delta t}
+ \rho^{n+1} ( \Vec{u}^{n+1} \cdot \nabla) \Vec{u}^{n+1} 
\\&\quad
+ \displaystyle\frac{1}{2} \nabla \cdot \left( \rho^{n+1} \Vec{u}^{n+1} \right) \Vec{u}^{n+1} 
- \nabla \cdot (\mu^{n+1} \Vec{D}(\Vec{u}^{n+1})) + \nabla p^{n+1} 
\\ &\quad
+ \frac{\sigma}{\gamma}\left( \frac{\phi^{n+1}-\phi^n}{\Delta t}\right) \nabla \phi^{n+1}  
+ \frac{\sigma}{\gamma}(  \Vec{u}^{n+1} \cdot \nabla \phi^{n+1}) \nabla \phi^{n+1}
= \Vec{G}(\phi^{n+1}) 
\\
&\nabla \cdot \Vec{u}^{n+1} = 0 
\\
&\displaystyle\rho^{n+1} = \barro + \frac{\delta\rho}{2} \phi^{n+1},\qquad 
\mu^{n+1}  =  \barmu + \frac{\delta\mu}{2} \phi^{n+1},
\end{aligned}
\right.
\end{equation}
completed with the boundary conditions~\eqref{BC}  and 
$ \left(\Vec{u}^0, p^0,\phi^0\right) =  \left(\Vec{u}_0,p_0,\phi_0\right).$

\begin{remark}
Through simple modifications of the terms involved in the system \eqref{EqSysSD} different schemes could be considered. Moreover, limiting the fixed point loop needed to solve \eqref{EqSysSD} to one iteration at each time step would produce a weakly coupled scheme.
\end{remark}

To finish this section, let us present the main result of this work:
the well posed character of \eqref{EqSysSD} and the respect of a maximum principle. More precisely, in the next section we intend to demonstrate the following,
\begin{theorem}\label{Le_resultat}
Under the assumptions of Theorem~\ref{Existence}, with $\Vec{u}^0$ divergence free. 
For $\mu_I$ sufficiently large there exist a $\tau\in ]0,1[$ depending on $\mu_I$ such that for all $\Delta t\le \tau\eta^2/\gamma$ the system \eqref{EqSysSD}, \eqref{BC}--\eqref{IC} admits a unique solution
$(\Vec{u}^n, p^n, \phi^n)$ in $\HzVoltroisd \cap \HVoltroisd{2} \times L^2_0(\Omega)\cap H^1(\Omega) \times H^3(\Omega)$ 
with  $$\nabla\cdot\Vec{u}^n = 0,\qquad\|\phi^n\|_{L^\infty}\le 1$$
at each time step $t^n = n\Delta t$.
\end{theorem}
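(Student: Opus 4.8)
The plan is to prove the statement by induction on the time level $n$: assuming the datum $(\Vec{u}^n,\phi^n)$ lies in $\HzVoltroisd\cap\HVoltroisd{2}\times H^3(\Omega)$ with $\nabla\cdot\Vec{u}^n=0$ and $\|\phi^n\|_{L^\infty}\le 1$ (the base case being the initial data \eqref{IC}), I would construct $(\Vec{u}^{n+1},p^{n+1},\phi^{n+1})$ sharing the same properties. The single-step solution is obtained as the limit of the stabilized fixed point whose iterates $(\Vec{u}_k,\phi_k)$ are defined by \eqref{AlgAC}--\eqref{AlgNSDivu}, and the proof decomposes into the three announced ingredients: well-posedness of one sweep (Theorem~\ref{Th-existnce}), uniform bounds (Theorem~\ref{TheoBorneUPhi}), and a contraction estimate (Theorem~\ref{convergenceH2}).

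First I would settle well-posedness of one sweep. Given $(\Vec{u}_k,\phi_k)$, the Allen--Cahn substep \eqref{AlgAC} is, after stabilization and linearization of the cubic term, a linear elliptic equation of the form treated in Lemma~\ref{Lemma_Elliptique}, with $a=\gamma$, divergence-free convection $\Vec{u}_k$, and reaction coefficient bounded below by $\tfrac{1}{\Delta t}-\tfrac{\gamma}{\eta^2}$; the restriction $\Delta t\le\tau\eta^2/\gamma$ with $\tau\in\,]0,1[$ is exactly what makes this quantity positive, so the lemma delivers a unique $\phi_{k+1}\in H^3(\Omega)$ with the corresponding estimate. The Navier--Stokes substep \eqref{AlgNSDivu} is a generalized Stokes problem for $(\Vec{u}_{k+1},p_{k+1})$: coercivity of the mass-plus-viscous form follows from Korn and Poincar\'e together with the lower bound $\mu_I$, the identity \eqref{rhoUVV} removes the convective diagonal contribution, and the inf--sup (LBB) condition controls the pressure, so Lax--Milgram on the divergence-free subspace yields a unique velocity and, by Stokes elliptic regularity, the $\HVoltroisd{2}$ bound.

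Next I would establish the $k$-uniform a priori estimates of Theorem~\ref{TheoBorneUPhi}. Testing \eqref{AlgAC} with $\phi_{k+1}$ and \eqref{AlgNSDivu} with $\Vec{u}_{k+1}$, cancelling transport via \eqref{UgradPhiPhi} and \eqref{rhoUVV} and absorbing $\Vec{G}$ through Assumption~\ref{HypoG}, bounds $\|\phi_{k+1}\|_1$ and $\|\Vec{u}_{k+1}\|_1$ uniformly in $k$; bootstrapping through Lemma~\ref{Lemma_Elliptique} and Stokes regularity upgrades these to the $H^3$ and $\HVoltroisd{2}$ bounds. The key qualitative property is $\|\phi_{k+1}\|_{L^\infty}\le 1$, which I would prove by a discrete maximum principle: testing the Allen--Cahn substep with $(\phi_{k+1}-1)^+$ and with $(\phi_{k+1}+1)^-$ and using the sign of the cubic nonlinearity, the hypothesis $\phi^n\in[-1,1]$ propagates to $\phi_{k+1}$, the time-step bound again being what is needed to dominate the reaction term.

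The crux, and the step I expect to be the main obstacle, is the contraction estimate of Theorem~\ref{convergenceH2}. Writing $\dfi=\phi_{k+1}-\phi_k$ and $\du=\Vec{u}_{k+1}-\Vec{u}_k$, I would subtract consecutive equations, test with the differences, and rewrite the cross terms using \eqref{Cauchy_laplace}, \eqref{Cauchy_convect} and \eqref{UgradPhiPhi1}; the viscous term leaves $\mu_I\|\Vec{D}(\du)\|^2$ on the left while every coupling and convection remainder is controlled by the \SPCSHY{} and absorbed with Young's inequality using the uniform bounds above. The goal is an estimate $\|(\du,\dfi)\|\le\Komou\,\|(\delta\Vec{u}_{k-1},\delta\phi_{k-1})\|$ in which the factor is a constant of type $\Komou$, hence tending to $0$ as $\mu_I\to\infty$; this is precisely why $\mu_I$ must be large and why $\tau$ is permitted to depend on $\mu_I$, and the delicate part is tracking the $\mu_I$-dependence of every constant (per the bookkeeping of Section~\ref{sec_Constante}) so that the factor is genuinely below $1$. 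Once contraction holds, the iterates are Cauchy and converge strongly; passing to the limit in \eqref{AlgAC}--\eqref{AlgNSDivu} (strong $H^2$ convergence suffices for the quadratic and cubic terms) identifies the limit as a solution of \eqref{EqSysSD}, the closed estimates transfer $\nabla\cdot\Vec{u}^{n+1}=0$ and $\|\phi^{n+1}\|_{L^\infty}\le 1$, and uniqueness follows from the same difference estimate applied to two solutions, closing the induction.
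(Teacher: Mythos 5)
Your overall architecture is the paper's: a per-time-step fixed point \eqref{AlgAC}--\eqref{AlgNSDivu}, analyzed through well-posedness of one sweep, $k$-uniform bounds, a contraction estimate, passage to the limit, and induction over $n$. But there is a genuine gap in your treatment of the $L^\infty$ bound. You propose to propagate $\|\phi_{k+1}\|_{L^\infty}\le 1$ by Stampacchia truncation ``using the sign of the cubic nonlinearity,'' with the time-step restriction dominating the reaction term. In the scheme actually being analyzed, the cubic $f(\phi_{k+1})$ is replaced by its Newton linearization $\frac{1}{\eta^2}(3\phi_k^2-1)\phi_{k+1}-\frac{2}{\eta^2}\phi_k^3$, so the monotone sign structure of $f$ is no longer available, and the paper's whole point is that the bound is rescued by the extra stabilization term $\frac{\gamma}{\eta^2}\beta(\phi_{k+1}-\phi_k)$: evaluating \eqref{exist0-fi} at interior extrema reduces the claim to the sign of two explicit cubic polynomials $p_\pm(\theta)$, which is guaranteed precisely when $\beta\ge 9/8$, \emph{independently of} $\Delta t$. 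Your proposal never engages with $\beta$, and without it (or an equivalent device) the maximum principle for the linearized substep is not established; the time-step restriction in the theorem comes from the contraction analysis (Corollary~\ref{convergenceH1} needs $\Delta t<\eta^2/(13\gamma)$), not from dominating the reaction term.

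A second, smaller inaccuracy: you assert the one-step contraction factor is a constant of type $\Komou$, hence tending to $0$ as $\mu_I\to\infty$. In the paper the factor splits as $K=\mathscr{C}_{\phi 0}+K_{u0}$, where only $K_{u0}$ is controlled by $\mu_I$ (and even then only under the extra structural hypothesis that $\mu_S/\mu_I$ stays bounded, cf.\ \eqref{limiteCfiCu}--\eqref{bornedeltaMu}, since $|\delta\mu|$ could otherwise grow with $\mu_I$); the phase-field part $\mathscr{C}_{\phi 0}=\max(\alpha_2/\alpha_0,\sqrt{\Delta t/(\alpha_0\gamma)})$ is independent of $\mu_I$ and must be driven below $1$ by the time-step restriction and the choice of $\beta$. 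So largeness of $\mu_I$ alone does not make the map contractive; both mechanisms are needed, and your bookkeeping as written would not close.
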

%
%
%
\section{Existence, uniqueness and a maximum principle.}

%
%
In order to prove Theorem~\ref{Le_resultat}, we introduce, at each time step, a linearized coupled fixed point sequence. 
We will then show the convergence of this sequence 
and, through identification of the limit as a solution of \eqref{EqSysSD}, we will get the result announced here.

\subsection{A coupled iterative scheme}
As we consider a fixed point loop, at each time step 
we introduce $\Vec{u}^{n+1}_k$ $p^{n+1}_k$ and $\phi^{n+1}_k$ the state variables at the $k-th$ iteration of the fixed point loop at time $t^{n+1}$. 
The index denoting the iteration number of the fixed point loop and the superscript the time step number. 

At  time $t^{n+1}$, knowing $\left(\Vec{u}^n, p^n, \phi^n \right) $, we consider the following initialization  
$$\phi^{n+1}_0 = \phi^n, \qquad \Vec{u}^{n+1}_0 = \Vec{u}^n \qquad  \mbox{and} \qquad p^{n+1}_0 = p^n.$$
In what follows, to simplify the notation, when there is no ambiguity, we will neglect the time step superscript on $\Vec{u}$, $p$ and $\phi$. 
For solving the nonlinear phase field equation 
\begin{equation}\label{EqACDisc}
\frac{\phi^{n+1}-\phi^n}{\Delta t} + \Vec{u}^{n+1} \cdot \nabla \phi^{n+1} - \gamma\Delta\phi^{n+1} + \gamma f(\phi^{n+1}) = 0
\end{equation}
two obvious choices are a Picard fixed point or a Newton-type method. Considering the robustness of the method and fact that $f$ is a third degree polynomial in $\phi^{n+1}$, the Newton approach is an appropriate  choice. However, from numerical experiments the Picard fixed point seems to have comparable performance to the Newton like fixed point (see the Numerical tests in section~\ref{NumRes}). 

Replacing $f(\phi_{k+1})$ 
by its first order development:
$$
f(\phi_{k+1}) \approx \frac{\phi_{k}}{\eta^2}(\phi_{k}^2 -1) + \frac{1}{\eta^2}(3\phi_k^2-1)(\phi_{k+1}-\phi_k) = \frac{1}{\eta^2}(3\phi_k^2-1)\phi_{k+1} -\frac{2}{\eta^2}\phi_k^3,
$$
we get for \eqref{EqACDisc}
$$
\displaystyle\frac{\phi_{k+1}-\phi^n}{\Delta t} + \Vec{u}_{k} \cdot \nabla \phi_{k+1} - \gamma\Delta\phi_{k+1} + \frac{\gamma}{\eta^2} (3\phi_k^2-1)\phi_{k+1} - \frac{2\gamma}{\eta^2}\phi_k^3= 0.
$$
The convergence analysis will rely heavily on the fact that the phase function satisfy, at each fixed point iteration
\begin{equation}\label{MaxPrinc}
\|\phi_{k+1}\|_{L^\infty(\Omega)} \le 1. 
\end{equation}
To insure such condition, we introduce the supplementary term
\begin{equation}\label{TermBeta}
\frac{\gamma}{\eta^2}\beta(\phi_{k+1}-\phi_k)
\end{equation}
where $\beta$ is an arbitrary nonnegative real constant. 
Obviously, at convergence this term will be  zero. We will show that, provided $\beta$ is above a specific lower bound, $\phi_k$ will satisfy  \eqref{MaxPrinc}  without any additional condition.

Using a simple linearization of the convective term in the momentum equation in \eqref{EqSysSD},
we get the following algorithm: 
at each time step $t^{n+1}$, \\

\noindent \textit{1. Initialization:} \; 
$\left(\Vec{u}_0,p_0,\phi_0\right) =\left(\Vec{u}^n,p^n,\phi^n\right). 
$\\

\noindent \textit{2. \underline{Until convergence}, knowing $\phi_k$ and $\Vec{u}_k$ compute:}
\begin{equation}\label{AlgAC}
\left \{
\begin{array}{l}
\begin{aligned}
\left(1+ \frac{\gamma \Delta t}{\eta^2} \left( \beta -1 + 3\phi_k^2 \right)\right)\phi_{k+1}& + \Delta t \Vec{u}_k \cdot \nabla \phi_{k+1}
-\gamma \Delta t \Delta \phi_{k+1} \\
& = \phi^n + \frac{\gamma \Delta t}{\eta^2} \phi_k\left(\beta  + 2\phi_k ^2\right) \quad \mbox{in}\, \Omega
\end{aligned} \\
\partial_{\Vec{n}} \phi_{k+1} = 0\quad \mbox{on}\quad \partial \Omega. 
\end{array}
\right.
\end{equation}
\begin{equation}\label{AlgRhoMu}
\rho_{k+1} = \barro + \frac{\delta\rho}{2} \phi_{k+1},\qquad 
\mu_{k+1}  =  \barmu + \frac{\delta\mu}{2} \phi_{k+1},
\end{equation}
\begin{equation}\label{AlgNSDivu}
\left \{
\begin{array}{l}
\begin{aligned}
\rho_{k+1} \frac{ \Vec{u}_{k+1}}{\Delta t}
&+\rho_{k+1}\left( \Vec{u}_{k}\cdot \nabla \right) \Vec{u}_{k+1}
+\frac{1}{2} \nabla \cdot \left(\rho_{k+1}\Vec{u}_{k}\right) \Vec{u}_{k+1}  \vspace*{4pt}\\
&- \nabla \cdot \left( \mu_{k+1} \Vec{D}(\Vec{u}_{k+1}) \right) + \nabla p_{k+1}\vspace*{4pt}\\
&+\frac{\sigma}{\gamma} \left( \Vec{u}_{k+1} \cdot \nabla \phi_{k+1} \right)  \nabla \phi_{k+1}
+ \frac{\sigma}{\gamma \Delta t}\left( \phi_{k+1} - \phi^n\right) \nabla \phi_{k+1} \vspace*{4pt}\\ 
&=
\Vec{G}(\phi_{k+1}) + \sqrt{\rho_{k+1}}\frac{\sqrt{\rho^{n}} \Vec{u}^{n}}{\Delta t} \quad \mbox{in}\, \Omega
\end{aligned} \vspace*{4pt}\\
\nabla \cdot \Vec{u}_{k+1} = 0  \quad \mbox{in}\, \Omega \vspace*{4pt}\\
\Vec{u}_{k+1} = \vzero \quad \mbox{on}\quad \partial \Omega. 
\end{array}
\right.
\end{equation}

\subsection{Analysis of the iterative scheme} 
The convergence of the fixed point \eqref{AlgAC}--\eqref{AlgNSDivu} to a solution of the time discrete system \eqref{EqSysSD}, will be demonstrated in four steps: 
existence of the solution, regularity,  convergence of the fixed point and finally identification of the limit as a solution of \eqref{EqSysSD}.

Although 
an existence result for 
\eqref{AlgAC}--\eqref{AlgNSDivu} is our main goal in this first step, 
as $\phi^n$ must be a phase field, the bound \eqref{MaxPrinc} must be established. 

\subsubsection{Well-posedness} 
\begin{theorem}\label{Th-existnce}
Assuming the hypothesis of Theorem~\ref{Existence} with $\Vec{u}^0$ divergence free.
For all step length $\Delta t>0$ and at each time step $n \ge 0$ and iteration $k\ge 0$ there exist a solution 
$\left(\Vec{u}_{k+1},p_{k+1},\phi_{k+1} \right)$ of \eqref{AlgAC}--\eqref{AlgNSDivu} in 
$ \HzVoltroisd \cap \HVoltroisd{2} \times L^2_0(\Omega)\cap H^1(\Omega) \times H^3(\Omega)$. 
Moreover 
\begin{equation}\label{CondPdM}
\|\phi_{k+1}\|_{L^\infty(\Omega)} \le 1,\qquad \forall\, \beta \ge \borneBeta.
\end{equation}
\end{theorem}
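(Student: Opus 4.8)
The plan is to decouple the two equations of \eqref{AlgAC}--\eqref{AlgNSDivu} and solve them in turn: first the linear phase-field problem \eqref{AlgAC} for $\phi_{k+1}$, then, using $\rho_{k+1},\mu_{k+1}$ from \eqref{AlgRhoMu}, the linear generalized Stokes (Oseen) problem \eqref{AlgNSDivu} for $(\Vec{u}_{k+1},p_{k+1})$. Everything is carried out by induction on $k$, with base case $\phi_0=\phi^n$, $\Vec{u}_0=\Vec{u}^n$ and $\|\phi^n\|_{L^\infty}\le1$ available from \eqref{IC} (and, for $n>0$, from the preceding time step). For \eqref{AlgAC} I would apply Lemma~\ref{Lemma_Elliptique} with $a=\gamma\Delta t$, convection field $\Vec{b}=\Delta t\,\Vec{u}_k\in\HzVoltroisd$ (divergence free by the induction hypothesis), and zeroth-order coefficient $c=1+\frac{\gamma\Delta t}{\eta^2}(\beta-1+3\phi_k^2)$. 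Since $\phi_k^2\ge0$ and $\beta\ge\borneBeta>1$, one has $c\ge1>0$, and $c\in L^\infty$ because $\phi_k\in H^3\hookrightarrow L^\infty$ for $d\le3$. The right-hand side $f=\phi^n+\frac{\gamma\Delta t}{\eta^2}\phi_k(\beta+2\phi_k^2)$ lies in $H^1$ (in fact $H^2$), using that $H^2(\Omega)$ is a Banach algebra for $d\le3$ and $\phi^n,\phi_k\in H^3$. Lemma~\ref{Lemma_Elliptique} with $r=1$ then gives a unique $\phi_{k+1}\in H^3(\Omega)$.

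The central and most delicate step is the bound \eqref{CondPdM}, which I would obtain by a truncation argument. Testing \eqref{AlgAC} against $w:=(\phi_{k+1}-1)^+\ge0$, the Neumann condition removes the boundary term and the convection term vanishes because $\Vec{u}_k$ is divergence free with $\Vec{u}_k|_{\partial\Omega}=\vzero$ (in the spirit of \eqref{UgradPhiPhi}); this reduces the identity to
\[
\int_\Omega c\,w^2+\gamma\Delta t\int_\Omega|\nabla w|^2=\int_\Omega (f-c)\,w .
\]
A direct computation, using the factorization $2\phi_k^3-3\phi_k^2+1=(\phi_k-1)^2(2\phi_k+1)$, gives
\[
f-c=(\phi^n-1)+\frac{\gamma\Delta t}{\eta^2}\,(\phi_k-1)\bigl[\beta+(\phi_k-1)(2\phi_k+1)\bigr].
\]
Under the induction hypotheses $\phi^n\le1$ and $\|\phi_k\|_{L^\infty}\le1$, the first term is $\le0$ and $(\phi_k-1)\le0$; minimizing $\phi\mapsto(\phi-1)(2\phi+1)$ over $[-1,1]$ yields the value $-\borneBeta$, attained at $\phi=1/4$, so $\beta\ge\borneBeta$ forces the bracket to be nonnegative and hence $f-c\le0$ pointwise. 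Thus the right-hand side above is $\le0$ while the left-hand side is $\ge0$ (as $c\ge1$), forcing $w\equiv0$, i.e.\ $\phi_{k+1}\le1$. The lower bound $\phi_{k+1}\ge-1$ follows from the same argument applied to $-\phi_{k+1}$, since \eqref{AlgAC} is invariant under $(\phi^n,\phi_k,\phi_{k+1})\mapsto(-\phi^n,-\phi_k,-\phi_{k+1})$ ($c$ is even in $\phi_k$ and $f$ is odd). This is precisely where the threshold $\borneBeta$ is pinned down, and it is the heart of the statement.

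With \eqref{CondPdM} in hand, $\mu_{k+1}=\barmu+\frac{\delta\mu}{2}\phi_{k+1}$ satisfies $\mu_I\le\mu_{k+1}\le\mu_S$ and $\rho_I\le\rho_{k+1}\le\rho_S$, which is exactly what makes \eqref{AlgNSDivu} solvable. On the divergence-free subspace of $\HzVoltroisd$ I would invoke Lax--Milgram: coercivity holds because the convective and half-divergence terms cancel by \eqref{rhoUVV} (both $\Vec{u}_k$ and the test field being divergence free), the surface-tension term $\frac{\sigma}{\gamma}(\Vec{v}\cdot\nabla\phi_{k+1})^2$ is nonnegative, and the remaining mass and viscous terms dominate $\frac{\rho_I}{\Delta t}\|\Vec{v}\|^2+\mu_I\|\Vec{D}(\Vec{v})\|^2$, which controls $\|\Vec{v}\|_1^2$ by Korn and Poincar\'e. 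Continuity of the form and boundedness of the linear functional use $\phi_{k+1}\in H^3\hookrightarrow W^{1,\infty}$, the $L^\infty$ bounds on $\rho_{k+1},\mu_{k+1}$, and Assumption~\ref{HypoG} for the $\Vec{G}(\phi_{k+1})$ contribution. This gives a unique $\Vec{u}_{k+1}$, and the pressure $p_{k+1}\in L^2_0(\Omega)\cap H^1(\Omega)$ is recovered through the LBB/de Rham argument.

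Finally, collecting all lower-order terms on the right as $\Ltroisd{2}$ data and invoking the regularity theory for the generalized Stokes problem on the convex $C^{2,1}$ domain upgrades the solution to $\Vec{u}_{k+1}\in\HzVoltroisd\cap\HVoltroisd{2}$ and $p_{k+1}\in H^1(\Omega)$. I expect the two genuinely delicate points to be the precise identification of the threshold $\borneBeta$ in the maximum principle and the $H^2$-regularity of the \emph{variable-viscosity} Stokes problem, the latter requiring the full $C^{2,1}$ convexity of $\Omega$ together with the $H^3$ (hence $W^{1,\infty}$) regularity of $\mu_{k+1}$ just established.
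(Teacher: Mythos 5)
Your argument is correct and reaches the same conclusion with the same threshold $\borneBeta$, but the proof of the key bound \eqref{CondPdM} takes a genuinely different route from the paper. The overall architecture coincides (decoupling of \eqref{AlgAC} and \eqref{AlgNSDivu}, double induction on $k$ and $n$, Lemma~\ref{Lemma_Elliptique} for the phase field, classical Oseen/Stokes theory plus variable-viscosity regularity for the velocity--pressure pair). For the maximum principle, however, the paper evaluates the strong form of \eqref{exist0-fi} at interior extremum points $\Vec{x}_*,\Vec{x}^*$ of $\phi_1$, reduces \eqref{lemachin} to the sign of the cubics $p_\pm(\theta)=\alpha(\theta\pm1)(\beta+2\theta^2\pm\theta-1)$, and pins down $\beta\ge\borneBeta$ from the discriminant $9-8\beta$ of the quadratic factors; you instead run a Stampacchia truncation with the test function $(\phi_{k+1}-1)^+$, kill the convection term by divergence-freeness of $\Vec{u}_k$, and extract the same threshold from $\min_{[-1,1]}(\phi-1)(2\phi+1)=-\borneBeta$ via the factorization $2\phi^3-3\phi^2+1=(\phi-1)^2(2\phi+1)$ --- an algebraically equivalent computation packaged variationally. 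Your route buys robustness: it does not require the extrema of $\phi_{k+1}$ to be attained in the interior (the paper simply assumes this; for a Neumann problem one would otherwise need a Hopf-type boundary-point argument), it works at the level of weak solutions, and it handles the lower bound cleanly by the parity of $c$ and $f$ in $(\phi^n,\phi_k,\phi_{k+1})$. The paper's pointwise route is shorter once the $H^3\hookrightarrow C^1$ regularity is granted. Two minor divergences worth noting: you restrict to $\beta\ge\borneBeta$ throughout, whereas the paper also discusses coercivity of the zeroth-order coefficient for smaller $\beta$ under a restriction $\Delta t<\eta^2/\gamma$ (harmless, since \eqref{CondPdM} requires $\beta\ge\borneBeta$ anyway); and your Lax--Milgram argument correctly yields existence \emph{and} uniqueness of $\Vec{u}_{k+1}$ for every $\mu_I>0$, because the convective and half-divergence terms cancel exactly by \eqref{rhoUVV}, so the ``large viscosity'' caveat the paper attaches to uniqueness at this linearized stage is not actually needed.
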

\begin{proof}
First we  prove that the result holds for $n=0$ and $k=0$ which is the first step in the iterative process for $t^1 = \Delta t$. 
Introducing
$$\alpha = \frac{\gamma \Delta t}{\eta^2},\quad g(x) = \alpha\beta + 2\alpha x^2,
\quad a(x) = g(x) +1 -\alpha (1-x^2), 
$$
we  have to find $\phi_1$ solution of  
\begin{equation}\label{exist0-fi}
\left \{
\begin{array}{l}
a(\phi_0)\phi_1 + \Delta t \Vec{u}_0 \cdot \nabla \phi_1 -\gamma \Delta t \Delta \phi_1 = \phi^0+\phi_0g(\phi_0)
\\
\partial_{\Vec{n}} \phi_1 = 0\quad \mbox{on}\quad \partial \Omega 
\end{array}
\right.
\end{equation}
where $\Vec{u}_0 = \Vec{u}^0$ and $\phi_0 = \phi^0$. Existence and uniqueness of a solution in $\HVolund{3}$ results from Lemma \ref{Lemma_Elliptique}.
However the coercivity needed induces a condition on $\Delta t$, 
namely
\begin{equation*}
a(x) > 0\quad \forall x\in[-1,1] \Leftrightarrow  a(x) \ge a(0)> 0 \Leftrightarrow \alpha(\beta-1) +1 > 0.
\end{equation*}
From the definition of $\alpha$ we get
$$\beta > 1 - \frac{\eta^2}{\gamma\Delta t}.$$
If $\beta\ge 1$ there is no condition on $\Delta t$ on the other hand if 
$\Delta t < \eta^2/\gamma$ then $\beta$ can be an arbitrary positive constant. Therefore existence and uniqueness is obtained for all positive values of $\Delta t$.

Assume $\phi_1$ achieves its maximum and minimum in $\Omega$ at $\Vec{x}^*, \Vec{x}_*$ respectively
$$
\nabla \phi_1(\Vec{x}_*) =\nabla \phi_1(\Vec{x}^*) =\vzero \qquad \mbox{and} \qquad \Delta \phi_1(\Vec{x}_*) \ge 0,\ \Delta \phi_1(\Vec{x}^*) \le 0. 
$$
From \eqref{exist0-fi} at $\Vec{x}_*$,$\Vec{x}^*$, using $|\phi_0(\Vec{x})| \le 1$ and $\beta \ge 0$, we have 
\begin{equation*}
\begin{aligned}
\frac{ \phi^0(\Vec{x_*}) + \phi_0(\Vec{x_*})g(\phi_0(\Vec{x_*})) }{a(\phi_0(\Vec{x_*}))}
\le \phi_1(\Vec{x}_*) 
&\le \phi_1(\Vec{x}) 
\\ &
\le \phi_1(\Vec{x}^*) 
\le \frac{ \phi^0(\Vec{x^*})+\phi_0(\Vec{x^*})g(\phi_0(\Vec{x^*})) }{a(\phi_0(\Vec{x^*}))}
\end{aligned}
\end{equation*}
The role of the parameter $\beta$ is clear, 
it will induce a bound on $\phi_1$ without the need 
for any condition on $\Delta t$. Denoting $\theta = \phi_0(\Vec{x}) = \phi^0(\Vec{x}) $, we want establish the values of $\beta$ giving simultaneously
\begin{equation}\label{lemachin}
-1
\le \frac{-1 + \theta g(\theta) }{a(\theta)}\quad 
\text{and}\quad
\frac{1 + \theta g(\theta)}{a(\theta)}
\le 1\quad\forall \theta\in [-1,1]. 
\end{equation}
Note that the coerciveness of $a(x)$ gives 
a strictly positive denominator.
We introduce 
$$p_-(\theta) = (1 + \theta g(\theta)) - a(\theta)=  \alpha (\theta - 1) (\beta + 2 \theta^2 - \theta - 1)$$
$$p_+(\theta) = (-1 + \theta g(\theta)) + a(\theta)= \alpha (\theta + 1) (\beta + 2 \theta^2 + \theta - 1)$$
These polynomials have complex roots if $\beta \ge 9/8$. 
Under this condition there is only one real root for each of those polynomial and we get
$$p_-(\theta) \le 0 \qquad\forall \theta \le 1 
\qquad\qquad
p_+(\theta) \ge 0 \qquad\forall \theta \ge -1$$
which gives us \eqref{lemachin}. For $\beta < 9/8$ there is no general conclusion regarding these inequalities therefore $\beta \ge 9/8$ is only a sufficient condition for the uniform bound for $\phi_{k+1}$.
%
%



%
%
%

Concerning the Navier-Stokes system \eqref{AlgNSDivu}, notice that at this stage of the loop $\phi_{k+1}\in H^3(\Omega)$, $\rho_{k+1}$ and $\mu_{k+1}$ are known. Therefore the existence, uniqueness (based on the large viscosity assumption) and regularity of the solution $(\Vec{u}_1, p_1)$  
comes form classical results (see  \cite{GirRav1986,QuaVal2008,Tem1979} for instance).  
Proceeding by induction on $k$  for fixed $n=0$, 
we get the existence of $\left(\Vec{u}_{k+1},p_{k+1},\phi_{k+1} \right)$ 
solution of \eqref{AlgAC}--\eqref{AlgNSDivu} with $\phi_{k+1}$ satisfying  
$\|\phi_{k+1}\|_{L^\infty(\Omega)}  \le 1$.

Finally, having proved the assumptions for $n=0$, using again an induction argument over the time iteration $n$ with the same technique as presented here, we get the results for $\left(\Vec{u}_{k+1},p_{k+1},\phi_{k+1} \right)$ solution of \eqref{AlgAC}--\eqref{AlgNSDivu} at time $(n+1)\Delta t$.  
\end{proof}
\begin{remark}
As noted in the proof, the condition on $\beta$ is not a necessity, and we could have a uniform $L^{\infty}$ bound for smaller values of $\beta$. This kind of  "maximum principle" for $\phi_{k+1}$ and $\phi^n$ is fundamental as it defines a phase field on $\Omega$.
\end{remark}
%
%
\subsubsection{Regularity estimates}
In what follows we rely on uniform bounds on $\phi_k$ and $\Vec{u}_k$ to get the convergence of the sequence $( \Vec{u}_k,\phi_k)_{k\ge 0}$ in $\HVoltroisd{2}\times\HVolund{3}$. Applying Lemma \ref{Lemma_Elliptique} to problem \eqref{AlgAC} is not totally satisfactory since we get a $H^2$ bound on $\phi_{k+1}$ depending (implicitly) on $\Vec{u}_k$. However, with some additional work, this lemma can be used to get uniform bound on $\phi_{k+1}$ and $\Vec{u}_{k+1}$. 
\begin{lemma}[Bound on $(\phi_k)_{k\ge 0}$]\label{LM-Est-Gfi-PF}
At each time step $t^{n+1} = (n+1)\Delta t$, for all nonnegative integer $k$ and for all real number $\beta \ge \borneBeta$,
the iterative solution $\phi_{k+1}$ of \eqref{AlgAC} satisfies the following uniform bound 
\begin{equation}\label{Est-Gfi-PF} 
\| \phi_{k+1}\| \le |\Omega|^{1/2} \qquad \mbox{and} \qquad 
\| \nabla \phi_{k+1}\| \le \Cgfi.
\end{equation}
where $\Cgfi$ is a constant depending on $\beta, \eta, \gamma, |\Omega|$ and $\Delta t$.
\end{lemma}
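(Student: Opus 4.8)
The plan is to obtain the first bound directly from the maximum principle already proved, and the gradient bound from a standard energy estimate on \eqref{AlgAC}. The $L^2$ bound is immediate: since Theorem~\ref{Th-existnce} guarantees $\|\phi_{k+1}\|_{L^\infty(\Omega)}\le 1$ for every $\beta\ge\borneBeta$ (this is \eqref{CondPdM}), we have $\|\phi_{k+1}\|^2=\int_\Omega|\phi_{k+1}|^2\le|\Omega|$, hence $\|\phi_{k+1}\|\le|\Omega|^{1/2}$ with no further work.

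For the gradient estimate I would test the first line of \eqref{AlgAC} with $\phi_{k+1}$ and integrate over $\Omega$. Three features make this productive. The diffusion term, after integration by parts using the homogeneous Neumann condition $\partial_{\Vec{n}}\phi_{k+1}=0$, produces $\gamma\Delta t\,\|\nabla\phi_{k+1}\|^2$, which is exactly the quantity to be controlled. The convective term $\Delta t\int_\Omega(\Vec{u}_k\cdot\nabla\phi_{k+1})\phi_{k+1}$ vanishes: since $\Vec{u}^0$ is divergence free and every iterate inherits $\nabla\cdot\Vec{u}_k=0$ and $\Vec{u}_k|_{\partial\Omega}=\vzero$ from \eqref{AlgNSDivu}, the second identity in \eqref{UgradPhiPhi} applies. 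Finally the zeroth–order coefficient $1+\frac{\gamma\Delta t}{\eta^2}(\beta-1+3\phi_k^2)$ is the coercive $a(\phi_k)\ge a(0)=1+\frac{\gamma\Delta t}{\eta^2}(\beta-1)\ge 1$ for $\beta\ge\borneBeta\ge 1$, so its contribution $\int_\Omega a(\phi_k)\phi_{k+1}^2$ is nonnegative and may simply be discarded.

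What remains is to control the right–hand side $\int_\Omega\bigl(\phi^n+\frac{\gamma\Delta t}{\eta^2}\phi_k(\beta+2\phi_k^2)\bigr)\phi_{k+1}$. Using $|\phi^n|\le 1$ and $|\phi_k|\le 1$ (again the maximum principle), the bracket is bounded pointwise by $1+\frac{\gamma\Delta t}{\eta^2}(\beta+2)$; Cauchy--Schwarz together with $\|\phi_{k+1}\|\le|\Omega|^{1/2}$ then bounds the integral by $\bigl(1+\frac{\gamma\Delta t}{\eta^2}(\beta+2)\bigr)|\Omega|$. Combining the pieces gives $\gamma\Delta t\,\|\nabla\phi_{k+1}\|^2\le\bigl(1+\frac{\gamma\Delta t}{\eta^2}(\beta+2)\bigr)|\Omega|$, that is $\|\nabla\phi_{k+1}\|^2\le\frac{|\Omega|}{\gamma\Delta t}+\frac{(\beta+2)|\Omega|}{\eta^2}$, a bound of the announced form $\Cgfi$ depending only on $\beta,\eta,\gamma,|\Omega|$ and $\Delta t$.

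I do not expect a serious obstacle, as this is a routine test–function argument. The only points requiring care are (i) confirming that the whole family $(\Vec{u}_k)_k$ is divergence free and vanishes on $\partial\Omega$ so that the convective term truly drops, which follows by induction from the construction, and (ii) keeping the sign of the zeroth–order term correct so that it is discarded rather than absorbed; both are secured by $\beta\ge\borneBeta$ and by the maximum principle \eqref{CondPdM}. Uniformity in $k$ (and in $n$) is automatic, since every constant above depends only on the fixed data.
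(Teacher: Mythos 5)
Your argument is correct and coincides with the paper's own proof: the $L^2$ bound follows from the maximum principle \eqref{CondPdM}, and the gradient bound is obtained by testing \eqref{AlgAC} with $\phi_{k+1}$, discarding the coercive zeroth-order term, killing the convective term via \eqref{UgradPhiPhi}, and bounding the right-hand side by Cauchy--Schwarz. Your final estimate $\|\nabla\phi_{k+1}\|^2\le\frac{|\Omega|}{\gamma\Delta t}+\frac{(\beta+2)|\Omega|}{\eta^2}$ is exactly the square of the constant $\Cgfi$ stated in the paper.
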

\begin{proof}
The first bound is a direct consequence of $\|\phi_{k+1}\|_\infty \le 1$. 
The second one can be obtained by multiplying \eqref{AlgAC} by $\phi_{k+1}$ and integrating over $\Omega$:
$$
\| \nabla \phi_{k+1}\| \le \frac{|\Omega|^{1/2}}{ \sqrt{\gamma \Delta t}}  
\left(  1+ \frac{\gamma \Delta t}{\eta^2}(\beta+2) \right)^{\frac{1}{2}}
=  \Cgfi. 
$$
\end{proof}
\begin{lemma}[Bound on $(\Vec{u}_k)_{k\ge 0}$]\label{TH-Est-GU-PF}
With the hypothesis of Lemma \ref{LM-Est-Gfi-PF}
the iterative solution $\Vec{u}_{k+1} $of \eqref{AlgNSDivu} satisfies the following uniform bound
\begin{alignat}{2}
\| \Vec{u}_{k+1}\| & \le   \Cu =  \frac{\Delta t |\Omega|^{\frac{1}{2}}}{\rho_I} +
\frac{\rho_S}{\rho_I} \|\Vec{u}^n\| +
\frac{\sigma    \Cgfi}{\rho_I\gamma}. \label{Est-U-PF1}  
\\ 
\| \Vec{D}(\Vec{u}_{k+1})\| & \le \Cgu =   
\sqrt{\frac{\rho_I }{\mu_I}}\frac{\Cu}{\sqrt{\Delta t}}.
\label{Est-U-PF} 
\\ 
\| \Vec{u}_{k+1} \cdot \nabla \phi_{k+1} \|
& \le \Cugfi = 
\sqrt{\frac{\gamma\rho_I}{\sigma}}  \frac{\Cu}{\sqrt{\Delta t}}.
\label{Est-UGFi-PF}
\end{alignat}
\end{lemma}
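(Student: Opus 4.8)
The plan is to obtain all three bounds from a single energy estimate produced by testing the momentum equation in~\eqref{AlgNSDivu} with $\Vec{u}_{k+1}$ itself. Multiplying that equation by $\Vec{u}_{k+1}$ and integrating over $\Omega$, the pressure term drops since $\prodL2{\nabla p_{k+1}, \Vec{u}_{k+1}} = -\prodL2{p_{k+1}, \nabla\cdot\Vec{u}_{k+1}} = 0$, and the two convective contributions cancel exactly by~\eqref{rhoUVV}; this identity applies because $\Vec{u}_k$ is divergence free (for $k=0$ this is the hypothesis on $\Vec{u}^n$, and for $k\ge 1$ it holds since each iterate solves~\eqref{AlgNSDivu}). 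Integrating the viscous term by parts gives $\prodL2{\mu_{k+1}\Vec{D}(\Vec{u}_{k+1}), \Vec{D}(\Vec{u}_{k+1})} \ge \mu_I\|\Vec{D}(\Vec{u}_{k+1})\|^2$, and the first surface-tension term becomes $\frac{\sigma}{\gamma}\|\Vec{u}_{k+1}\cdot\nabla\phi_{k+1}\|^2$; both are nonnegative and are kept on the left, while the time term is bounded below by $\frac{\rho_I}{\Delta t}\|\Vec{u}_{k+1}\|^2$.

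For the right-hand side, the force term is handled by Assumption~\ref{HypoG} together with $\|\phi_{k+1}\|_{L^\infty}\le 1$, yielding $\prodL2{\Vec{G}(\phi_{k+1}), \Vec{u}_{k+1}}\le|\Omega|^{1/2}\|\Vec{u}_{k+1}\|$, and the inertial term is bounded by $\frac{\rho_S}{\Delta t}\|\Vec{u}^n\|\|\Vec{u}_{k+1}\|$ using $\sqrt{\rho_{k+1}\rho^n}\le\rho_S$. The delicate contribution is the linear surface-tension term $\frac{\sigma}{\gamma\Delta t}\prodL2{(\phi_{k+1}-\phi^n)\nabla\phi_{k+1}, \Vec{u}_{k+1}}$: a crude estimate using $\|\phi_{k+1}-\phi^n\|_{L^\infty}\le 2$ would produce too large a constant. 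Instead I would split off the $\phi_{k+1}$ part, which vanishes by~\eqref{UgradPhiPhi} since $\prodL2{\phi_{k+1}\nabla\phi_{k+1}, \Vec{u}_{k+1}} = 0$ for divergence-free $\Vec{u}_{k+1}$, leaving only $-\frac{\sigma}{\gamma\Delta t}\prodL2{\phi^n\nabla\phi_{k+1}, \Vec{u}_{k+1}}$, which is controlled by $\frac{\sigma}{\gamma\Delta t}\Cgfi\|\Vec{u}_{k+1}\|$ thanks to $\|\phi^n\|_{L^\infty}\le 1$ and the gradient estimate of Lemma~\ref{LM-Est-Gfi-PF}.

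Collecting these estimates gives the master inequality
\begin{equation*}
\frac{\rho_I}{\Delta t}\|\Vec{u}_{k+1}\|^2 + \mu_I\|\Vec{D}(\Vec{u}_{k+1})\|^2 + \frac{\sigma}{\gamma}\|\Vec{u}_{k+1}\cdot\nabla\phi_{k+1}\|^2 \le \frac{\rho_I}{\Delta t}\,\Cu\,\|\Vec{u}_{k+1}\|,
\end{equation*}
since the right-hand factor is exactly $\frac{\rho_I}{\Delta t}\Cu$ by the definition of $\Cu$. Discarding the last two nonnegative terms on the left and dividing by $\|\Vec{u}_{k+1}\|$ gives~\eqref{Est-U-PF1}. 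Substituting $\|\Vec{u}_{k+1}\|\le\Cu$ back into the right side bounds it by $\frac{\rho_I}{\Delta t}\Cu^2$; then retaining only $\mu_I\|\Vec{D}(\Vec{u}_{k+1})\|^2$ on the left yields~\eqref{Est-U-PF}, and retaining only $\frac{\sigma}{\gamma}\|\Vec{u}_{k+1}\cdot\nabla\phi_{k+1}\|^2$ yields~\eqref{Est-UGFi-PF}. I expect the main obstacle to be precisely the linear surface-tension term: securing the stated constants, rather than a lossier version, hinges on the cancellation~\eqref{UgradPhiPhi} so that only $\phi^n$ with its sharp $L^\infty$ bound survives; everything else is a routine combination of Cauchy--Schwarz with the lower bounds $\rho\ge\rho_I$ and $\mu\ge\mu_I$.
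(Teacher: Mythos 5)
Your proposal is correct and follows essentially the same route as the paper: test with $\Vec{u}_{k+1}$, kill the convective terms via \eqref{rhoUVV}, use \eqref{UgradPhiPhi} to reduce the linear surface-tension term to its $\phi^n$ part bounded through $\|\phi^n\|_{L^\infty}\le 1$ and Lemma~\ref{LM-Est-Gfi-PF}, and then read off all three bounds from the single resulting inequality with right-hand side $\frac{\rho_I}{\Delta t}\Cu\|\Vec{u}_{k+1}\|$. The only cosmetic difference is that the paper also invokes the Korn inequality when stating the conclusion, which is not needed for the three estimates as written since they involve $\|\Vec{D}(\Vec{u}_{k+1})\|$ directly.
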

\begin{proof}
We multiply \eqref{AlgNSDivu} by $\Vec{u}_{k+1}$ and we integrate over $\Omega$, we find 
\begin{alignat*}{2}
\frac{1}{\Delta t}  \int_\Omega \rho_{k+1} &|\Vec{u}_{k+1}|^2 +  \int_\Omega \mu_{k+1}   | \Vec{D}(\Vec{u}_{k+1}) |^2 +
\frac{\sigma}{\gamma} \| \Vec{u}_{k+1} \cdot \nabla \phi_{k+1} \|^2 \\ 
& \quad+ 
\int_\Omega 
\rho_{k+1} \left( \Vec{u}_{k}\cdot \nabla \right) \Vec{u}_{k+1} \cdot \Vec{u}_{k+1}  +
\int_\Omega  \frac{1}{2} \nabla \cdot \left(\rho_{k+1}  \Vec{u}_{k}\right) \Vec{u}_{k+1}\cdot  \Vec{u}_{k+1}
\\ & \quad+ 
\frac{\sigma}{\gamma \Delta t}  
\int_\Omega \left( \Vec{u}_{k+1} \cdot \nabla \right)  \phi_{k+1}  \left( \phi_{k+1} - \phi^n\right)
\\ 
&  =
\int_\Omega \Vec{G}(\phi_{k+1}) \cdot  \Vec{u}_{k+1} +
\frac{1}{\Delta t}\int_\Omega \sqrt{\rho_{k+1}}\sqrt{\rho_{k}} \Vec{u}^{n} \cdot  \Vec{u}_{k+1} . 
\end{alignat*}
By \eqref{rhoUVV},  the second line vanishes and by \eqref{UgradPhiPhi},  the third one equal to 
\begin{align*}
-\frac{\sigma}{\gamma \Delta t } \int_\Omega 
\left( \Vec{u}_{k+1} \cdot \nabla \right)  \phi_{k+1}  \phi^n. 
\end{align*}
Using  \eqref{BorneRhoMu0} and Assumption \ref{HypoG}, yields 
\begin{alignat*}{2}
\frac{\rho_I}{\Delta t} \| \Vec{u}_{k+1}\|^2 + 
\mu_I \| &\Vec{D} (\Vec{u}_{k+1})\|^2 + 
\frac{\sigma}{\gamma} \| \Vec{u}_{k+1} \cdot \nabla \phi_{k+1} \|^2 \\ 
&  \le  
\left( 
\| \Vec{G}(\phi_{k+1})\|  + \frac{\rho_S}{\Delta t} \| \Vec{u}^{n}\|
+ 
\frac{\sigma}{\gamma \Delta t }  \| \phi^n\|_{L^\infty}  \| \nabla \phi_{k+1}\|
\right)  \| \Vec{u}_{k+1}\| \\ 
& \le 
\left( 
|\Omega|^{1/2}  +  \frac{\rho_S}{\Delta t} \| \Vec{u}^{n}\|
+ 
\frac{\sigma}{\gamma \Delta t }   \|\nabla \phi_{k+1}\|
\right)  \| \Vec{u}_{k+1}\|.
\end{alignat*}
Based on Lemma \ref{LM-Est-Gfi-PF} and Korn inequality, we obtain the desired estimates.
\end{proof}

These last lemmas, although giving uniform bounds on $\phi_{k}$, $\Vec{u}_{k}$ and the tensor $\Vec{D}(\Vec{u}_{k})$ do not provide sufficient regularity for the solution of \eqref{AlgAC}--\eqref{AlgNSDivu} to get the convergence results sought for. Based on Lemmas \ref{Lemma_Elliptique}, \ref{LM-Est-Gfi-PF} and estimate \eqref{Est-UGFi-PF} of Lemma~\ref{TH-Est-GU-PF}, we can get uniform bounds in spaces more suited for the rest of the analysis.
\begin{theorem}\label{TheoBorneUPhi}
With the hypothesis of Lemma \ref{LM-Est-Gfi-PF}
the iterative solutions of schemes \eqref{AlgAC}--\eqref{AlgNSDivu} satisfy the following regularity results
\begin{alignat}2
& \|  \phi_{k+1}\|_{2}  \le \Cfid, 
\label{phiH2} \\ 
& \|  p_{k+1}\| \le \Cpc = \Cp,
\label{pL2} \\ &
\|\Vec{u}_{k+1}\|_{2}  +  \left\| \frac{p_{k+1}}{\mu_{k+1}}\right\|_{1}  \le \Cudc = \Cud.  
\label{uH2} 
\end{alignat}
Where $\Cfid, \Komix^p, \Komix^u, \Komou^u$ and $\Komou^f$  are non-negative constants following the convention in section~\ref{sec_Constante}.
\end{theorem}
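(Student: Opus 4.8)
The plan is to establish the three estimates \eqref{phiH2}, \eqref{pL2} and \eqref{uH2} in that order, each time recasting the corresponding equation of \eqref{AlgAC}--\eqref{AlgNSDivu} as a problem whose regularity theory is already at hand and then closing a nonlinear inequality by interpolation and Young's inequality. I would use throughout the uniform bounds already proved: $\|\phi_{k+1}\|_{L^\infty(\Omega)}\le 1$ (Theorem~\ref{Th-existnce}), $\|\nabla\phi_{k+1}\|\le\Cgfi$ (Lemma~\ref{LM-Est-Gfi-PF}), and $\|\Vec{D}(\Vec{u}_{k+1})\|\le\Cgu$ (hence $\|\Vec{u}_{k+1}\|_1\le\Com\Cgu$ by Korn) together with $\|\Vec{u}_{k+1}\cdot\nabla\phi_{k+1}\|\le\Cugfi$ (Lemma~\ref{TH-Est-GU-PF}). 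The guiding principle is to isolate in every right-hand side the genuinely $\mu$-dependent contributions, so that the resulting constants split, as prescribed in Section~\ref{sec_Constante}, into a $\mu$-independent part and a part that vanishes as $\mu_I\to\infty$.

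For \eqref{phiH2} I would avoid applying Lemma~\ref{Lemma_Elliptique} to \eqref{AlgAC} with the advection inside the operator, since that yields a constant depending on $\Vec{u}_k$. Instead I would move the convective term to the right-hand side and read \eqref{AlgAC} as the Neumann problem $-\gamma\Delta t\,\Delta\phi_{k+1}+c_k\phi_{k+1}=f_k-\Delta t\,\Vec{u}_k\cdot\nabla\phi_{k+1}$ with $c_k=1+\tfrac{\gamma\Delta t}{\eta^2}(\beta-1+3\phi_k^2)$ and $f_k=\phi^n+\tfrac{\gamma\Delta t}{\eta^2}\phi_k(\beta+2\phi_k^2)$. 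Here $c_k$ is bounded below by the coercivity constant of Theorem~\ref{Th-existnce} and $\|f_k\|$ is controlled by $\|\phi_k\|_{L^\infty}\le 1$, so Lemma~\ref{Lemma_Elliptique} with $r=0$ bounds $\|\phi_{k+1}\|_2$ by a $\mu$-independent multiple of $\|f_k\|+\|\phi_{k+1}\|_1+\Delta t\|\Vec{u}_k\cdot\nabla\phi_{k+1}\|$. To handle the last term without a spurious $H^2$ factor I would write $\|\Vec{u}_k\cdot\nabla\phi_{k+1}\|\le\|\Vec{u}_k\|_{L^6}\|\nabla\phi_{k+1}\|_{L^3}$, bound $\|\Vec{u}_k\|_{L^6}\le\Com\|\Vec{u}_k\|_1$ by Sobolev, and apply the interpolation inequality \eqref{Galdi} to get $\|\nabla\phi_{k+1}\|_{L^3}\le\Com\|\nabla\phi_{k+1}\|^{1-s}\|\phi_{k+1}\|_2^{s}$ with $s<1$; Young's inequality then absorbs the emerging $\|\phi_{k+1}\|_2$ into the left-hand side. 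Because $\|\Vec{u}_k\|_1\le\Com\Cgu$ behaves like $\mu_I^{-1/2}$, this advective remainder is of $\Komou$ type while $\|f_k\|$ and $\|\phi_{k+1}\|_1$ are of $\Comou$ type, giving the announced bound $\Cfid$.

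For the pressure \eqref{pL2} I would test the momentum equation of \eqref{AlgNSDivu} against $\Vec{v}\in\HzVoltroisd$ and use the inf-sup (LBB) condition with constant $\kappa$, so that $\kappa\|p_{k+1}\|$ is dominated by the supremum over $\Vec{v}$ of all the remaining terms divided by $\|\Vec{v}\|_1$. The viscous contribution $\int_\Omega\mu_{k+1}\Vec{D}(\Vec{u}_{k+1}):\Vec{D}(\Vec{v})\le\mu_S\Com\Cgu\|\Vec{v}\|_1$ gives exactly the isolated summand $\mu_S\Com\Cgu$ of $\Cp$; the discrete time derivative, the two convective terms, the two surface-tension terms, the force $\Vec{G}(\phi_{k+1})$ and the data term $\sqrt{\rho_{k+1}\rho^n}\,\Vec{u}^n/\Delta t$ are each controlled by Lemmas~\ref{LM-Est-Gfi-PF} and~\ref{TH-Est-GU-PF} and Sobolev embeddings, and are gathered into $\Komix^p$. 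This produces $\|p_{k+1}\|\le\Komix^p+\mu_S\Com\Cgu=\Cpc$.

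The velocity bound \eqref{uH2} is the main obstacle, and it is where the large-viscosity hypothesis is used. I would recast the momentum equation as a \emph{constant-coefficient} Stokes system for the pair $(\Vec{u}_{k+1},\,p_{k+1}/\mu_{k+1})$: since $\nabla\cdot(\mu_{k+1}\Vec{D}(\Vec{u}_{k+1}))=\tfrac{\mu_{k+1}}{2}\Delta\Vec{u}_{k+1}+\Vec{D}(\Vec{u}_{k+1})\nabla\mu_{k+1}$, dividing by $\mu_{k+1}$ gives $-\tfrac12\Delta\Vec{u}_{k+1}+\nabla(p_{k+1}/\mu_{k+1})=\tilde{\Vec F}$, $\nabla\cdot\Vec{u}_{k+1}=0$, where $\tilde{\Vec F}$ collects $\mu_{k+1}^{-1}$ times the inertial, convective, surface-tension, forcing and data terms, together with the two delicate commutator terms $\mu_{k+1}^{-1}\Vec{D}(\Vec{u}_{k+1})\nabla\mu_{k+1}$ and $-p_{k+1}\mu_{k+1}^{-2}\nabla\mu_{k+1}$. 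Classical Stokes $H^2\times H^1$ regularity then bounds $\|\Vec{u}_{k+1}\|_2+\|p_{k+1}/\mu_{k+1}\|_1$ by $\Com\|\tilde{\Vec F}\|$. The difficulty is that $\nabla\mu_{k+1}=\tfrac{\delta\mu}{2}\nabla\phi_{k+1}$ makes the two commutators into the products $\Vec{D}(\Vec{u}_{k+1})\nabla\phi_{k+1}$ and $p_{k+1}\nabla\phi_{k+1}$, whose $L^2$ norms I would estimate by H\"older, Sobolev and \eqref{Galdi}, thereby reintroducing $\|\Vec{u}_{k+1}\|_2$ and $\|p_{k+1}/\mu_{k+1}\|_1$ on the right with fractional powers; for the second one I would interpolate $p_{k+1}$ between its $L^2$ bound \eqref{pL2} and $\|p_{k+1}/\mu_{k+1}\|_1$ via $p_{k+1}=\mu_{k+1}(p_{k+1}/\mu_{k+1})$. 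The prefactors $\dmu/\mu_I$ and $\Cpc/\mu_I$ carried by these commutators yield, after Young's inequality and absorption, precisely the exponents $6/d$ and $(6-d)/d$ appearing in $\Cud$; the absorption is licit only because ``$\mu_I$ sufficiently large'' makes these prefactors small, which is the crux of the whole estimate. Collecting the remaining $\mu_I^{-1}$-weighted terms into $\Komou^u$ and $\Komou^f$ then gives $\Cudc=\Cud$.
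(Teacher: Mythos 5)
Your proposal follows essentially the same route as the paper: the Neumann elliptic problem for $\phi_{k+1}$ with the advection moved to the right-hand side and absorbed via interpolation plus Young, the $H^{-1}$/inf--sup bound for the pressure isolating the $\mu_S\Cgu$ term, and the rescaled constant-coefficient Stokes system in $(\Vec{u}_{k+1},p_{k+1}/\mu_{k+1})$ with the two commutator terms handled by the same interpolation-and-absorption device. One small correction: the absorption of the fractional powers of $\|\Vec{u}_{k+1}\|_{2}$ and $\|p_{k+1}/\mu_{k+1}\|_{1}$ does not hinge on $\mu_I$ being large --- Young's inequality with a free parameter already yields the coefficient $d/6<1$ for any $\mu_I$, and the large-viscosity hypothesis only becomes essential later, in the contraction estimate of Corollary~\ref{convergenceH1}.
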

\begin{proof}
To bound $\phi_{k+1}$ in $H^2(\Omega)$, we rewrite  \eqref{AlgAC}   as follow
\begin{equation}\label{Laplace_Psi}
\left\{
\begin{array}{rccl} 
-\gamma \Delta t \,  \Delta \phi_{k+1}  + c(\Vec{x}) \phi_{k+1}  &=&   f^{k,n}_\phi  &\qquad \mbox{in } \Omega 
\vspace*{4pt}\\ 
\partial_\Vec{n} \phi_{k+1}   &=&  0 &\qquad \mbox{on }  \partial \Omega
\end{array}
\right.
\end{equation}
with
\begin{alignat*}2
1 \le c(\Vec{x})  &=  1+ \frac{\gamma \Delta t}{\eta^2}\left(\beta -1\right) + \frac{3\gamma \Delta t}{\eta^2}  \phi_{k}^{2} (\Vec{x})\in  L^{\infty}(\Omega)  \\
f^{k,n}_\phi &= \phi^n +  \frac{\gamma \Delta t}{\eta^2}\left(\beta+2\phi_k^2\right)\phi_{k}  - \Delta t  u_k \cdot \nabla\phi_{k+1} . 
\end{alignat*}
\noindent Using Lemma \ref{Lemma_Elliptique} with $r=0$ and \eqref{Est-Gfi-PF} we get
\begin{equation}
\begin{aligned} \label{BormeNormeH2Psik1}
\| \phi_{k+1} \|_{2} 
&\le  \Comou \left(  \left( 1 + \frac{\gamma \Delta t}{\eta^2}(\beta+2) \right) |\Omega |^{\frac{1}{2}} +  \Delta t  \| \Vec{u}_k \cdot \nabla\phi_{k+1}   \| +  \Cgfi  \right). 
\end{aligned}
\end{equation}
In order to bound $ \Vec{u}_k \cdot \nabla \phi_{k+1}$ in $L^2(\Omega)$, using a  \SPCSHY
\begin{equation}\begin{aligned} \label{S_kn_t1}
\Delta t\|\Vec{u}_k\cdot \nabla \phi_{k+1}\| &\le  {\Com} \Delta t\| \Vec{u}_k\|_{\Vec{L^6}}  \| \nabla \phi_{k+1}\|_{\Vec{L^3}} \\ 
& \le  {\Com} \Delta t\|\nabla \Vec{u}_k\|  \| \nabla \phi_{k+1}\|^{\frac{6-d}{6}}  \| \nabla \phi_{k+1}\|_{1}^{\frac{d}{6}}\\
& \le {\Com} \alpha^{\frac{d}{6-d}}\left( 
\Delta t  \| \nabla \Vec{u}_{k}\|\right)^{\frac{6}{6-d}} 
\|\nabla \phi_{k+1}\| + \frac{d}{6\alpha} \| \phi_{k+1}\|_{2} \vspace*{4pt}\\ 
& \le {\Com}\alpha^{\frac{d}{6-d}}\left( \Delta t \Cgu\right)^{\frac{6}{6-d}} \Cgfi + \frac{d}{6\alpha} \| \phi_{k+1}\|_{2}\quad\forall \alpha > 0.
\end{aligned}\end{equation}
Then \eqref{phiH2} comes from \eqref{BormeNormeH2Psik1} and \eqref{S_kn_t1}. 
We are now in position to bound $p_{k}$. To do so, 
we first rewrite \eqref{AlgNSDivu} as  a Stokes system with variable viscosity and no-slip condition on $\partial \Omega$ 
\begin{equation}\label{Stokes1}
\left\{
\begin{array}{rccl}
-\nabla \cdot \left(\mu_{k+1} \Vec{D}(\Vec{u}_{k+1}) \right) + \nabla p_{k+1} &=&   \Vec{f}^{k,n}_\Vec{u}(\Vec{u}_{k+1})  &\qquad \mbox{in } \Omega 
\vspace*{4pt}\\ 
\nabla \cdot \Vec{u}_{k+1} &=&  0 &\qquad \mbox{in }  \Omega,
\end{array}
\right.
\end{equation}
where 
\begin{equation}\label{Fk}
\begin{aligned}
\Vec{f}^{k,n}_\Vec{u}(\Vec{v}) &= - \rho_{k+1}\left( \Vec{u}_{k}\cdot \nabla \right) \Vec{v} 
- \frac{1}{2} \nabla \cdot \left(\rho_{k+1} \Vec{u}_{k}\right) \Vec{v} 
-\frac{\sigma}{\gamma} \left( \Vec{v} \cdot \nabla \phi_{k+1} \right) \nabla \phi_{k+1} 
\\ 
&\quad  
- \frac{\rho_{k+1}} {\Delta t} \Vec{v}  - \frac{\sigma}{\gamma \Delta t}\left( \phi_{k+1} - \phi^n\right) \nabla \phi_{k+1}  
+\Vec{G}(\phi_{k+1}) 
\\ 
&\quad  + \sqrt{\rho_{k+1}} \frac{\sqrt{\rho^{n}} \Vec{u}^{n}}{\Delta t} .   
\end{aligned}\end{equation}
Then, we have  
\begin{alignat*}{2}
\|\nabla p_{k+1}\|_{H^{-1}}  & \le  \| \Vec{f}^{k,n}_\Vec{u}(\Vec{u}_{k+1}) \|_{\Vec{H}^{-1}}   
+  \| \nabla \cdot (\mu_{k+1}  \Vec{D}(\Vec{u}_{k+1}) ) \|_{\Vec{H}^{-1}}  \notag \\ 
& \le  \| \Vec{f}^{k,n}_\Vec{u}(\Vec{u}_{k+1}) \|_{\Vec{H}^{-1}}  +  
\mu_S \| \Vec{D}(\Vec{u}_{k+1})  \|.
\end{alignat*}
Since $p_{k+1} $ belongs to $L^2_0(\Omega)$,  Poincar\'e inequality  (see  for instance \cite{BoyFab2013} ) gives
\begin{align}\label{EstPL2}
\| p_{k+1}\|  & \le  \Com \left(
 \| \Vec{f}^{k,n}_\Vec{u}(\Vec{u}_{k+1}) \|_{\Vec{H}^{-1}}  
+ \mu_S \Cgu \right). 
\end{align}
Based on  Lemma \ref{TH-Est-GU-PF}, estimation \eqref{phiH2}  and Cauchy-Schwartz inequality, the first line in \eqref{Fk} can be bounded as follow
\begin{equation*}
\begin{aligned}
\biggl\| \rho_{k+1}( \Vec{u}_{k}\cdot &\nabla) \Vec{u}_{k+1}  
 + \frac{1}{2} \nabla \cdot \left(\rho_{k+1} \Vec{u}_{k}\right) \Vec{u}_{k+1} 
+ \frac{\sigma}{\gamma} \left( \Vec{u}_{k+1} \cdot \nabla \phi_{k+1} \right) \nabla \phi_{k+1}
 \biggr\|_{\Vec{H}^{-1}} \qquad
\\
& \le  \sup_{\substack{\Vec{v} \in \Vec{H}^1_0(\Omega)\\ \|\Vec{v}\|_{1} = 1}} \biggl( 
\rho_S  \int_{\Omega }\left| ( \Vec{u}_{k}\cdot \nabla) \Vec{u}_{k+1} \cdot \Vec{v} \right| 
+ \frac{1}{2}  \int_{\Omega } \left| 
\nabla \cdot \left(\rho_{k+1} \Vec{u}_{k}\right) \Vec{u}_{k+1}  \cdot \Vec{v}  \right| 
\\ 
& \hspace*{0.38\textwidth} + \frac{\sigma}{\gamma} \int_{\Omega} \left|  ( \Vec{u}_{k+1} \cdot \nabla \phi_{k+1} ) \nabla \phi_{k+1} \cdot \Vec{v} \right| \biggr)
\\
& \le \Com \left( \rho_S (\Cgu)^2 +   \frac{\sigma}{\gamma}  \Cugfi \, \Cfid
\right) = \Komix_1.
\end{aligned}
\end{equation*}
As for the second and third line in \eqref{Fk}, it can be bounded by 
\begin{equation*}\begin{aligned}
\biggl\| \frac{\rho_{k+1}} {\Delta t} \Vec{v}  + \frac{\sigma}{\gamma \Delta t}( \phi_{k+1} - &\phi^n)\nabla \phi_{k+1}  
-\Vec{G}(\phi_{k+1}) 
- \sqrt{\rho_{k+1}} \frac{\sqrt{\rho^{n}} \Vec{u}^{n}}{\Delta t} \biggr\|_{\Vec{H}^{-1}} 
\\ 
& \le \Com \left( 
\frac{\rho_S}{\Delta t}  \Cgu + \frac{\sigma }{\gamma \Delta t} \Cgfi + |\Omega|^{\frac{1}{2}} 
+ \frac{\rho_S}{\Delta t} \| \Vec{u}^n\|
\right) = \Komix_2.
\end{aligned}
\end{equation*}
Finally 
\begin{equation}\label{PkFka}
\| p_{k+1}\| \le \Com(\Komix_1+\Komix_2+\mu_S \Cgu) = \Cp = \Cpc.\end{equation}
To bound  $ \|\Vec{u}_k\|_{2}$, we  first  rearrange \eqref{Stokes1}  using $\hat p_{k+1} = p_{k+1}/ \mu_{k+1}$
\begin{equation}\label{Stokes-VarViscosity}
\left\{
\begin{aligned}
-&\nabla \cdot \Vec{D}(\Vec{u}_{k+1})  + \nabla \hat p_{k+1} =  \displaystyle
\frac{1}{\mu_{k+1}} \biggl(  \Vec{D}(\Vec{u}_{k+1}) \cdot \nabla {\mu_{k+1}}
\\ & \qquad\qquad\qquad\qquad\qquad\qquad\qquad
+   \Vec{f}^{k,n}_\Vec{u}(\Vec{u}_{k+1})  + \hat p_{k+1} \nabla \mu_{k+1} \biggr) 
\\
& \nabla \cdot \Vec{u}_{k+1} =  0, 
\end{aligned}
\right.
\end{equation}
where $\Vec{f}^{k,n}_\Vec{u} $ is defined by \eqref{Fk}. 
Next,  following the regularity result for Stokes problem (see for instane  \cite{BoyFab2013}), 
we have  
\begin{equation}\label{UkFk0}
\begin{aligned}
\| \Vec{u}_{k+1}&\|_{2}  + \left\| \hat p_{k+1} \right\|_{1}  
\\
 & \le 
\frac{\Com}{\mu_I} \left(
 \| \Vec{D}(\Vec{u}_{k+1}) \cdot \nabla \mu_{k+1}\|
+  \| \hat p_{k+1} \nabla \mu_{k+1}\|   
+  \| \Vec{f}^{k,n}_\Vec{u}(\Vec{u}_{k+1})\|\right).
\end{aligned}
\end{equation}
The first two terms on the right hande side can be bounded (using  a  \SPCSHY)  as follow  
\begin{alignat*} 2
\|\Vec{D} (\Vec{u}_{k+1} )\cdot \nabla \mu_{k+1}\| & \le 
{\Comi{1}} \dmu  \|\phi_{k+1}\|_{2}\|\Vec{D} (\Vec{u}_{k+1})\|_{\Vec{L}^3} \notag 
\\ &
\le {\Comi{1}} \dmu  \|\phi_{k+1}\|_{2}\|\Vec{D} (\Vec{u}_{k+1})\|^{\frac{6-d}{6}} 
\|\Vec{D} (\Vec{u}_{k+1})\|_{1} ^{\frac{d}{6}}  \notag \\ 
& \le  
{\Comi{1}} \dmu \Cfid (\Cgu)^{\frac{6-d}{6}} 
\|\Vec{u}_{k+1}\|_{2} ^{\frac{d}{6}}.
\end{alignat*}
Then
\begin{equation}\begin{aligned}\label{DkNablaFi}
\frac{\Comi{0}}{\mu_I} \|\Vec{D} \Vec{u}_{k+1} \cdot \nabla \mu_{k+1}\| & \le 
\Com  \left(\frac{\dmu }{\mu_I}\right)^\frac{6}{d}   (\Cfid)^\frac{6}{d} (\Cgu)^\frac{6-d}{d} 
+ \frac{d}{6  } \| \Vec{u}_{k+1}\|_{2}.
\end{aligned}\end{equation}
And
\begin{equation*}\begin{aligned}
\left\| \frac{p_{k+1}}{\mu_{k+1}} \nabla \mu_{k+1}\right\|  
&\le \Com \dmu  \| \phi_{k+1}\|_{2}  \left\| \frac{p_{k+1}}{\mu_{k+1}} \right\|_{L^3}  
\\ & 
\le \Com \dmu  \Cfid  \left\| \frac{p_{k+1}}{\mu_{k+1}} \right\|^\frac{6-d}{d}  \left\| \frac{p_{k+1}}{\mu_{k+1}} \right\|_{1}^\frac{d}{6}.
\end{aligned}\end{equation*}
Then 
\begin{equation}\begin{aligned}\label{PhatH1}
\frac{\Comi{0}}{\mu_I} \biggl\| \frac{p_{k+1}}{\mu_{k+1}} &\nabla \mu_{k+1}\biggr\|  
\le \Com \dmu^\frac{6}{d} \mu_I^\frac{d-12}{d}   
(\Cfid)^\frac{6}{d}  (\Cpc)^\frac{6-d}{d}  
+ \frac{d}{6}  \left\| \frac{p_{k+1}}{\mu_{k+1}}\right\|_{1}.
\end{aligned}\end{equation}
Combining \eqref{PhatH1}, \eqref{DkNablaFi} with  \eqref{UkFk0} there is two constants $\Komix^u$ and $\Komou^u$ such that,
\begin{equation}
\begin{aligned}\label{UkFk}
\| \Vec{u}_{k+1}\|_{2}  + \left\| \frac{p_{k+1}}{\mu_{k+1}} \right\|_{1}   
&\le \Cudpasf \\
& \qquad  \qquad 
+ \frac{\Com}{\mu_I} \| \Vec{f}^{k,n}_\Vec{u}(\Vec{u}_{k+1})\|.
\end{aligned}
\end{equation}
Going back to~\eqref{Fk}, each term in $\Vec{f}^{k,n}_\Vec{u}$ can be bounded using   \eqref{Est-U-PF}, \eqref{phiH2} and applying a  \SPCSHY.  For $\alpha>0$
\begin{equation*}\label{Reg-UgU}
\begin{aligned}
\| \rho_{k+1} \left( \Vec{u}_{k}\cdot \nabla \right) \Vec{u}_{k+1}  \| & \le 
\rho_{S}  \|\left( \Vec{u}_{k}\cdot \nabla \right) \Vec{u}_{k+1}  \| 
\le \rho_{S}  \|\Vec{u}_{k}\|_{\Vec{L}^6}  \|\nabla \Vec{u}_{k+1}  \|_{\Vec{L}^3} 
\\ & 
\le {\Comi{1}} \rho_S \|\nabla \Vec{u}_{k}\| \|\nabla \Vec{u}_{k+1}  \|^{\frac{6-d}{6}} \|\nabla \Vec{u}_{k+1}  \|^{\frac{d}{6}}_{\Vec{1}}  
\\ & 
\le {\Comi{1}} 
\alpha^{\frac{d}{6-d}}{\rho_S}^{\frac{6}{6-d}}  \|\nabla \Vec{u}_{k}\|^{\frac{6}{6-d}} \|\nabla \Vec{u}_{k+1}  \| + \frac{d}{6\alpha} \|\Vec{u}_{k+1}  \|_{2} 
\\ &
\le \Comi{1}\alpha^{\frac{d}{6-d}}\rho_S^{\frac{6}{6-d}} \left(\Cgu\right)^{\frac{12-d}{6-d}} + \frac{d}{6\alpha} \|\Vec{u}_{k+1}  \|_{2}.
\end{aligned}\end{equation*}
For the second term (as $\Vec{u}_k$ is divergence free)
\begin{equation*}\label{Reg-DivUU}
\begin{aligned}
\| \nabla \cdot \left(\rho_{k+1}  \Vec{u}_{k} \right) \Vec{u}_{k+1}  \| & 
\le  \| \Vec{u}_{k+1}  \|_{\Vec{L}^6}  \| \nabla \cdot \left(\rho_{k+1}  \Vec{u}_{k} \right)\|_{L^3} 
\\ & 
\le {\Comi{2}}\| \nabla \Vec{u}_{k+1}  \| \| \nabla \rho_{k+1}  \|_{\Vec{L}^6} \| \Vec{u}_{k}  \|_{\Vec{L}^6}
\\ & 
\le  {\Comi{2}} \dro \| \nabla \Vec{u}_{k+1}  \|\| \nabla \Vec{u}_{k}  \|  \| \nabla \phi_{k+1}  \|_{1}
\\
& \le  \Comi{2}\dro\left(\Cgu\right)^2 \Cfid.
\end{aligned}\end{equation*}
For the third term 
\begin{equation*}\label{uGfiGfi}
\begin{aligned}
\| \left( \Vec{u}_{k+1} \cdot \nabla \phi_{k+1} \right)  \nabla \phi_{k+1} \|  & 
\le \|  \Vec{u}_{k+1} \|_{\Vec{L}^6}  \| \nabla \phi_{k+1} \|^2_{\Vec{L}^6}  
\le \Cgu (\Cfid)^2.
\end{aligned}
\end{equation*}
Finally 
\begin{equation*}\begin{aligned}\label{BoutenG}
\biggl\| - \frac{\sigma}{\gamma \Delta t}\left( \phi_{k+1} - \phi^n\right) \nabla \phi_{k+1} & + \Vec{G}(\phi_{k+1})+ \sqrt{\rho_{k+1}} \frac{\sqrt{\rho^{n}} \Vec{u}^{n}}{\Delta t}  
-  \rho_{k+1} \frac{\Vec{u}_{k+1} }{\Delta t}\biggr\|
\\ 
&\le 
\left(2\frac{\sigma\,  \Cgfi}{\gamma  \Delta t}
+  |\Omega|^{\frac{1}{2}} 
+  2\frac{\rho_S  \Cu}{\Delta t}\right). 
 \end{aligned}
 \end{equation*}
%
Then, combining the inequalities of each terms, there is a $\alpha$ and a constant $\Komou^f$ such that
\begin{equation}\label{KFk}
\frac{\Com}{\mu_I}\| \Vec{f}^{k,n}_\Vec{u}(\Vec{u}_{k+1}) \|  
\le 
\Komou^f + \frac{1}{2} \|\Vec{u}_{k+1}  \|_{2}
\end{equation}

\noindent and from \eqref{UkFk}--\eqref{KFk}, 
\begin{equation}
\begin{aligned}\label{UkFka}
\| \Vec{u}_{k+1}\|_{2}  + \biggl\| \frac{p_{k+1}}{\mu_{k+1}}& \biggr\|_{1}   
\le  \Cud 
\end{aligned}
\end{equation}
which completes the proof.
\end{proof}

These uniform bounds suffice to conclude to the existence of a converging subsequence with very few constraints on the physical data. However, we are interested in establishing the convergence of the algorithm composed of the loop \eqref{AlgAC}--\eqref{AlgNSDivu}.
\subsubsection{Convergence analysis and passage to the limit}\label{Sec-CVG}
For the next proofs, we introduce the following quantities 
$$
\alpha_0=\left(1 +  \frac{\gamma \Delta t}{\eta^2}(\beta -1) \right),\qquad 
\alpha_1= \frac{ \gamma \Delta t}{\eta^2}\qquad \mbox{and} \qquad
\alpha_2= \frac{ \gamma \Delta t}{\eta^2}(\beta+12).
$$
It is clear that  $1 \le \alpha_0 $ for $\beta \ge \borneBeta$. \\

\begin{theorem}\label{th-Est-fikm}
Let  $(\Vec{u}_k,\phi_k)_{k\ge 0}$ be the sequence of solutions of \eqref{AlgAC}--\eqref{AlgNSDivu} at time $t^{n+1}$. Assuming $\beta \ge \borneBeta$
then   
for all nonnegative integers $m,k$:
\begin{alignat}{2}
\| \phi_{k+1}-\phi_{m+1} \| & \le   
\mathscr{C}_{\phi0}  \left( \|\phi_{k}  - \phi_{m} \|  + \|\nabla\left(\Vec{u}_{k}  - \Vec{u}_{m} \right)\|\right).
\label{Est-fikm} \\
\|\nabla \left( \Vec{u}_{m+1} - \Vec{u}_{k+1} \right) \| 
& \le 
K_{u0} (\| \phi_{m} -  \phi_{k}\| + \| \nabla \left(\Vec{u}_{m} -  \Vec{u}_{k} \right) \|).
\label{Est-ukm} 
\end{alignat}
Where $\mathscr{C}_{\phi0}$ depends only on  $\gamma, \eta,\beta$ and $\Delta t$. 
\end{theorem}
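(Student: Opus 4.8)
The plan is to derive both contraction bounds by testing the difference of two consecutive linearized equations against the natural error, relying on the $L^\infty$ bound \eqref{CondPdM} and the uniform $H^2$ estimates \eqref{phiH2}--\eqref{uH2}.

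For \eqref{Est-fikm} I would write \eqref{AlgAC} at the iterations $k$ and $m$, subtract, and test against $\delta\phi:=\phi_{k+1}-\phi_{m+1}$. The zeroth-order and diffusion terms produce the coercive contribution $\alpha_0\|\delta\phi\|^2+\gamma\Delta t\|\nabla\delta\phi\|^2$ with $\alpha_0\ge 1$. The reaction and source differences $3\phi_k^2\phi_{k+1}-3\phi_m^2\phi_{m+1}$, $\phi_k^3-\phi_m^3$ and $\beta(\phi_k-\phi_m)$ are expanded with \eqref{Cauchy_convect} and, using $\|\phi_j\|_{L^\infty}\le 1$, bounded by exactly $\alpha_2\|\phi_k-\phi_m\|\,\|\delta\phi\|$ (which motivates the definition of $\alpha_2$). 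For the convective difference I would split $\Vec{u}_k\cdot\nabla\phi_{k+1}-\Vec{u}_m\cdot\nabla\phi_{m+1}$: the piece $\Vec{u}_m\cdot\nabla\delta\phi$ tested against $\delta\phi$ vanishes by \eqref{UgradPhiPhi} (since $\Vec{u}_m$ is divergence free), while the remaining piece is rewritten through \eqref{UgradPhiPhi1} as $-\int_\Omega(\Vec{u}_k-\Vec{u}_m)\cdot\nabla\delta\phi\,\phi_{m+1}$ and, since $\|\phi_{m+1}\|_{L^\infty}\le 1$, bounded by $\Delta t\|\Vec{u}_k-\Vec{u}_m\|\,\|\nabla\delta\phi\|$. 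A Young inequality absorbs $\|\nabla\delta\phi\|$ into $\gamma\Delta t\|\nabla\delta\phi\|^2$; dividing by $\alpha_0$ and using Poincar\'e on the velocity difference gives \eqref{Est-fikm}, with $\mathscr{C}_{\phi0}$ built from $\alpha_0,\alpha_2$ and the Poincar\'e constant, hence from $\gamma,\eta,\beta,\Delta t$ alone. I would note that the same test also controls $\|\nabla\delta\phi\|$ by the identical right-hand side, a companion $H^1$ estimate used next.

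For \eqref{Est-ukm} I would subtract \eqref{AlgNSDivu} at $k$ and $m$ and test against $\delta\Vec{u}:=\Vec{u}_{k+1}-\Vec{u}_{m+1}$. As $\nabla\cdot\delta\Vec{u}=0$, the pressure difference drops out. The viscous difference, via \eqref{Cauchy_laplace}, yields the coercive $\mu_I\|\Vec{D}(\delta\Vec{u})\|^2\ge C\mu_I\|\nabla\delta\Vec{u}\|^2$ (Korn) plus the remainder $\tfrac{\delta\mu}{2}\int_\Omega(\phi_{k+1}-\phi_{m+1})\Vec{D}(\Vec{u}_{m+1}):\Vec{D}(\delta\Vec{u})$. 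Writing $\rho_{k+1}-\rho_{m+1}=\tfrac{\delta\rho}{2}(\phi_{k+1}-\phi_{m+1})$ and $\mu$ likewise, I would decompose each inertial, convective and surface-tension term with \eqref{Cauchy_convect} into a phase-field difference, a difference $\Vec{u}_k-\Vec{u}_m$ of convecting velocities, and $\delta\Vec{u}$ itself; the skew-symmetry \eqref{rhoUVV} cancels the diagonal convective contribution $\int_\Omega\rho_{k+1}(\Vec{u}_k\cdot\nabla)\delta\Vec{u}\cdot\delta\Vec{u}+\tfrac12\int_\Omega\nabla\cdot(\rho_{k+1}\Vec{u}_k)|\delta\Vec{u}|^2=0$, and the sign-definite surface-tension part $\int_\Omega(\delta\Vec{u}\cdot\nabla\phi_{k+1})^2\ge 0$ is discarded. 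Every surviving term is estimated by a \SPCSHY{} using \eqref{phiH2}--\eqref{uH2} and the Lipschitz bound on $\Vec{G}$ from Assumption~\ref{HypoG}, giving a right-hand side of the form $\big(\text{const}\big)\big(\|\phi_{k+1}-\phi_{m+1}\|_1+\|\nabla(\Vec{u}_k-\Vec{u}_m)\|\big)\|\nabla\delta\Vec{u}\|$. Absorbing $\|\nabla\delta\Vec{u}\|$, dividing by $C\mu_I$, and invoking \eqref{Est-fikm} together with its companion $H^1$ bound to replace $\|\phi_{k+1}-\phi_{m+1}\|_1$ by $\|\phi_k-\phi_m\|+\|\nabla(\Vec{u}_k-\Vec{u}_m)\|$ yields \eqref{Est-ukm}.

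The main obstacle is the velocity estimate: the bookkeeping of the nonlinear difference terms, and in particular the remainders carrying $\nabla(\phi_{k+1}-\phi_{m+1})$ from the surface-tension and variable-viscosity contributions, which force the simultaneous use of the $L^2$ and $H^1$ control of $\delta\phi$ from the first step. It is precisely here that the large-viscosity hypothesis intervenes, since division by $\mu_I$ must leave a constant $K_{u0}$ small enough to later drive the coupled contraction; keeping the dependence of each constant on $\mu_I,\mu_S,\gamma,\eta,\beta$ and $\Delta t$ transparent, as organized in Section~\ref{sec_Constante}, is the delicate bookkeeping point.
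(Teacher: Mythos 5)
Your proposal is correct and follows essentially the same route as the paper's proof: both parts test the subtracted equations against $\phi_{k+1}-\phi_{m+1}$ and $\Vec{u}_{m+1}-\Vec{u}_{k+1}$, use the identities \eqref{Cauchy_laplace}, \eqref{Cauchy_convect}, \eqref{rhoUVV}, \eqref{UgradPhiPhi}--\eqref{UgradPhiPhi1} together with the $L^\infty$ bound and the uniform estimates of Theorem~\ref{TheoBorneUPhi}, discard the sign-definite convective and surface-tension contributions, and close the velocity estimate by dividing by $\mu_I$ and substituting the companion $H^1$ bound on the phase-field difference. The decomposition into coercive terms plus remainders, and the final assembly of $\mathscr{C}_{\phi0}$ and $K_{u0}$, match the paper's argument step for step.
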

\begin{proof}
To simplify the presentation we introduce the notation 
$$\dfi = \phi_{k+1}-\phi_{m+1}, \qquad \du = \Vec{u}_{m+1}-\Vec{u}_{k+1},\qquad \delta\Vec{u}^p = \Vec{u}_{m}-\Vec{u}_{k}$$
The proof will be split in two parts, beginning by the analysis of the sequence $\phi_k$.  
\\

\noindent\textit{\textbf{Analysis of the sequence $(\phi_k)_{k \ge 0}$.}}
\noindent Taking \eqref{AlgAC} at two different iterations, $k$ and $m$,  multiplying each equalities by $\delta\phi$ and subtracting them, we get
\begin{equation*}\begin{aligned}
\alpha_0  \|\delta\phi \|^2 &+\gamma \Delta t \| \nabla \delta\phi \|^2 
+\frac{3\gamma \Delta t}{\eta^2}  \int_\Omega\left( \phi_k^2\phi_{k+1}-\phi_m^2 \phi_{m+1} \right) \dfi 
\\ & 
= 
\Delta t  \int_\Omega \left( \Vec{u}_k \cdot  \nabla\phi_{k+1} -  
\Vec{u}_m \cdot \nabla \phi_{m+1} \right)\dfi 
\\ & \qquad 
+ \frac{\gamma \Delta t}{\eta^2}\beta  
\int_\Omega \left( \phi_{k} - \phi_{m}  \right) \dfi
+ \frac{2\gamma \Delta t}{\eta^2}
\int_\Omega \left( \phi_{k}^3 - \phi_{m}^3  \right) \dfi.
\end{aligned}\end{equation*}
From the bound on $(\phi_k)_k$,  we have
$$
|\left(\phi_k^2-\phi_m^2\right) \phi_{m+1}\dfi| 
\le 2 |\left(\phi_{k}  - \phi_{m}\right)\dfi|,
\quad |\left(\phi_k^3-\phi_m^3\right) \dfi| \le 
3 |\left(\phi_{k}  - \phi_{m}\right)\dfi|,
$$
and using \eqref{UgradPhiPhi1} for the term in $\Vec{u}$ we have
\begin{equation}\begin{aligned}\label{Inegalite_Phi_L2}
\alpha_0 \|&\dfi \|^2 + \gamma \Delta t \| \nabla \dfi \|^2
\le  \alpha_2\| \phi_m - \phi_k \|  \| \dfi\|
+  \Delta t \|\Vec{u}_k -\Vec{u}_m\|\|\nabla\dfi\|.\vspace*{6pt}
\end{aligned}\end{equation}
From which we get
\begin{equation}\label{ineg_fikm_l2}
\|\phi_{m+1}-\phi_{k+1} \| \le  
\frac{\alpha_2}{\alpha_0} \| \phi_m - \phi_k \|+ 
\sqrt{\frac{\Delta t}{\alpha_0 \gamma}}  \|\Vec{u}_k -\Vec{u}_m\|
\end{equation}
and \eqref{Est-fikm} follows by choosing
$
\mathscr{C}_{\phi0} = \displaystyle\max\left(\frac{\alpha_2}{\alpha_0}, \sqrt{\frac{\Delta t}{\alpha_0 \gamma}}\right).
$

In same manner we get a bound on the  gradient of the difference
and a $H^1$ bound 
\begin{equation}\label{Inegalite_Phi_H1}
\| \phi_{k+1}  - \phi_{m+1} \|_{1} \le \mathscr{C}_{\phi1} \left(  \| \phi_m - \phi_k \| + \|\Vec{u}_k -\Vec{u}_m\| \right),
\end{equation}
with 
$$\mathscr{C}_{\phi1} = \max\left(\mathscr{C}_{\phi0},\frac{\alpha_2}{ \sqrt{2\alpha_0 \gamma \Delta t}}, \frac{1}{\gamma}\right).$$\\

\noindent\textit{\textbf{Analysis of the sequence $(\Vec{u}_k)_{k \ge 0}$.}} We multiply \eqref{AlgNSDivu} at $(k+1)$ and $(m+1)$ steps by $\du$ and we integrate over $\Omega$,  yielding a sum of seven terms
\begin{equation*}
\begin{aligned}
0 = \frac{1}{\Delta t}&  
\int_\Omega 
\left( 
\rho_{m+1} \Vec{u}_{m+1}  -  
\rho_{k+1} \Vec{u}_{k+1}
\right)  \cdot
\du
\\ & 
+\int_\Omega \left( 
\mu_{m+1}   \Vec{D}(\Vec{u}_{m+1})  -  
\mu_{k+1}   \Vec{D}(\Vec{u}_{k+1}) \right) :
\Vec{D} (\du)  
\\ &
+ \frac{\sigma}{\gamma} \int_\Omega \left[ \left( \Vec{u}_{m+1} \cdot \nabla \phi_{m+1} \right)  \nabla \phi_{m+1} -
\left( \Vec{u}_{k+1} \cdot \nabla \phi_{k+1} \right)  \nabla \phi_{k+1}
\right] \cdot   \du
\\ &
+\frac{\sigma}{\gamma \Delta t}\int_\Omega \left[  
\left( \phi_{m+1} - \phi^n\right) \nabla \phi_{m+1}  - 
\left( \phi_{k+1} - \phi^n\right) \nabla \phi_{k+1} 
\right]  \cdot  \du
\\ &
+ \int_\Omega \left( 
\rho_{m+1} \left( \Vec{u}_{m}\cdot \nabla \right) \Vec{u}_{m+1}  - 
\rho_{k+1} \left( \Vec{u}_{k}\cdot \nabla \right) \Vec{u}_{k+1} 
\right)  \cdot \du 
\\ &
+ \frac{1}{2}  \int_\Omega  
\left( \nabla \cdot \left(\rho_{m+1}  \Vec{u}_{m}\right) \Vec{u}_{m+1}  -
\nabla \cdot \left(\rho_{k+1}  \Vec{u}_{k}\right) \Vec{u}_{k+1}
\right) \cdot \du 
\\ &
- \int_\Omega \left(\left(  \Vec{G}(\phi_{m+1})- \Vec{G}(\phi_{k+1}) \right)  
-
\frac{ \sqrt{\rho^{n}}}{\Delta t}  \left( 
\sqrt{\rho_{m+1}} -   \sqrt{\rho_{k+1}} \right) \Vec{u}^{n}\right)\cdot\du 
. \end{aligned}\end{equation*}
To establish \eqref{Est-ukm} we will take advantage of the fact that the first three terms of this sum contains squared norms. Labeling the terms $A_0$ through $A_6$ we have
$$A_0 +A_1 + A_2 = -\sum\limits_{i=3}^6 A_i.$$
Adding  and subtracting $\rho_{m+1} \Vec{u}_{k+1} $ in $A_0$ and using \eqref{AlgRhoMu},
\begin{equation*}\begin{aligned}
A_0 &= \frac{1}{\Delta t}\int_\Omega \rho_{m+1} | \du|^2 
+ \frac{\dro}{2\Delta t} \int_\Omega \dfi\,
\Vec{u}_{m+1}\cdot \du\\
& = \frac{1}{\Delta t}\int_\Omega \rho_{m+1} | \du|^2 + A_{01}
\end{aligned}\end{equation*}
Using \eqref{Cauchy_laplace} we have
\begin{equation*}\begin{aligned}
A_1 &= \int_\Omega \mu_{m+1}|\Vec{D}(\du)|^2 
+\frac{\dmu}{2} \int_\Omega \dfi\,
\Vec{D}(\Vec{u}_{m+1}) :\Vec{D} (\du)\\
& = \int_\Omega \mu_{m+1}|\Vec{D}(\du)|^2  + A_{11}.
\end{aligned}\end{equation*}
We can rewrite $A_2$ as follow:
\begin{equation}\begin{aligned} 
A_2 &= 
\frac{\sigma}{\gamma} \int_\Omega \left( \Vec{u}_{m+1} \cdot \nabla \phi_{m+1} \right) 
\nabla \dfi \cdot   \du 
+\frac{\sigma}{\gamma} \int_\Omega \left( \Vec{u}_{m+1} \cdot \nabla  \dfi \right) \nabla  \phi_{k+1} \cdot   \du 
\\ &  \qquad \qquad \qquad 
+ \frac{\sigma}{\gamma} \int_\Omega \left| \du \cdot \nabla  \phi_{k+1} \right|^2
\\  & 
=  A_{21} + \frac{\sigma}{\gamma}  \| \du \cdot \nabla  \phi_{k+1} \|^2.
\end{aligned}\end{equation}
From these equality we get
\begin{equation}\label{SommeAi}
\mu_I\|\nabla\du\|^2 \le  |\sum_{i=3}^6  A_i| + |A_{01}| + |A_{11}| + |A_{21}|.
\end{equation}
We will now establish an appropriate bound for the right hand side of this last inequality. Using \textit{\SPCSHY s},
\begin{equation}\begin{aligned}\label{A02}
|A_{01}| &\le \frac{\dro}{\Delta t}  {\Com} \|\dfi\|_{1} 
\|\nabla \du\| \| \nabla \Vec{u}_{m+1} \|
\\ & 
\le  \frac{\dro}{\Delta t} \Com  \Cgu\|\dfi\|_{1} 
\|\nabla \du\|. 
\\ &
\\ 
|A_{11}| & \le  \frac{\dmu}{2}  \|\dfi\|_{L^4} 
\| \Vec{D}(\Vec{u}_{m+1}) \|_{\Vec{L}^4} \| \Vec{D} (\du)\|
\\ & 
\le \dmu \Com\| \dfi\|_{1} 
\| \nabla \Vec{u}_{m+1} \|_{\Vec{L}^4} \| \nabla\du\| 
\\ & 
\le \dmu {\Com}  \Cudc \| \dfi\|_{1} \| \nabla\du\|.
\\ &
\\
|A_{21}|  &\le  \displaystyle \frac{\sigma}{\gamma} 
\| \Vec{u}_{m+1}\|_{\Vec{L}^6} \| \nabla   \phi_{m+1}  \| _{\Vec{L}^6}  
\| \nabla \dfi\| \| \du\| _{\Vec{L}^6}
\\ &
\le  \frac{\sigma}{\gamma}{\Com} \Cfid \Cgu  
\|\dfi\|_{1} \| \nabla\du\|. 
\end{aligned}\end{equation}
For $A_3$, using \eqref{UgradPhiPhi} 
\begin{equation}\label{A3}
\begin{aligned}
|A_3| &\le
\frac{\sigma}{\gamma \Delta t}\int_\Omega |\nabla \dfi \cdot  \du\, \phi^n| 
\le {\Com} \frac{\sigma}{\gamma \Delta t}  \|\dfi\|_{1}  
\|\nabla \du  \|.
\end{aligned}\end{equation}
For $A_4+A_5$, we use \eqref{Cauchy_convect}, moreover since all velocities are divergence free we get parts of both terms canceling each others, leaving
\begin{equation*}\begin{aligned} 
A_4 + A_5 &=  
\int_\Omega \left(\rho_{m+1} \left( \dup\cdot \nabla \right) \Vec{u}_{m+1} \right)  \cdot\du 
+ \frac{\dro}{2}\int_\Omega \left( \dfi
\left(  \Vec{u}_{k} \cdot \nabla \right) \ \Vec{u}_{k+1} \right)  \cdot\du
\\ &\qquad 
+\frac{1}{2}  \int_\Omega 
\nabla \rho_{m+1} \cdot \dup   
\Vec{u}_{m+1}  \cdot \du 
+ \frac{1}{2}
\frac{\dro}{2}  \int_\Omega 
(\nabla \dfi \cdot \Vec{u}_{k}) 
(\Vec{u}_{k+1} \cdot \du).
\end{aligned}\end{equation*}
For those four terms we get (through \textit{\SPCSHY s})
\begin{equation}\begin{aligned}\label{I1}
\int_\Omega  
\rho_{m+1}( \left( \Vec{u}_{m}  -  \Vec{u}_{k} \right)\cdot \nabla)   \Vec{u}_{m+1}
\cdot\du 
&\le \rho_S {\Com} \| \nabla \dup \| 
\| \nabla \du \| \| \nabla \Vec{u}_{m+1}\| \\ 
& \le \rho_S {\Com} \Cgu \| \nabla \dup \| \| \nabla \du \|. 
\end{aligned}\end{equation}
\begin{equation}\begin{aligned}\label{I2}
\frac{\dro}{2}\int_\Omega (\dfi\,(\Vec{u}_{k} \cdot \nabla) \Vec{u}_{k+1})  \cdot \du
& \le \frac{\dro}{2} \|\dfi\|_{L^6} 
 \|  \Vec{u}_{k}\|_{\Vec{L^6}} \| \nabla  \Vec{u}_{k+1}\| \|  \du \|_{\Vec{L^6}}
\\ & 
\le \dro {\Com} \|\dfi\|_{1} 
\|  \nabla \Vec{u}_{k}\| \| \nabla  \Vec{u}_{k+1}\| \|   \nabla \du \|  
\\ & 
\le \dro {\Com}\left(\Cgu\right)^2
\|\dfi \|_{1} \| \nabla \du \|.
\end{aligned}\end{equation}
\begin{equation}\begin{aligned}\label{J1}
\frac{1}{2}  \int_\Omega \nabla \rho_{m+1}\cdot \dup\,\Vec{u}_{m+1}  \cdot \du 
&\le \frac{\dro}{4} \| \nabla \phi_{m+1} \|  
\| \Vec{u}_{m+1} \|_{\Vec{L}^6} 
\| \dup \|_{\Vec{L}^6} 
\| \du \|_{\Vec{L}^6} 
\\  &
\le \dro {\Com}  \Cgfi \Cgu \| \nabla \dup \| \| \nabla \du\|. 
\end{aligned}\end{equation}
\begin{equation}\begin{aligned}\label{J2}
\frac{\dro}{4}  \int_\Omega 
(\nabla\dfi &\cdot \Vec{u}_{k}) 
(\Vec{u}_{k+1} \cdot \du) 
\le  \dro {\Com} \left(\Cgu\right)^2 
\|\dfi \|_{1} \| \nabla \du\|. 
\end{aligned}\end{equation}
From \eqref{I1}--\eqref{J2}, we have 
\begin{equation}\begin{aligned}\label{A4+A5}
|A_4 +A_5|  \le \dro {\Com} &\left(\Cgu\right)^2 \|\dfi \|_{1}\|\nabla \du \|
\\ & \quad 
+  {\Com} \Cgu (\rho_S+ \dro \Cgfi) \| \nabla \dup \| \| \nabla \du\|.
\end{aligned}\end{equation}
From Assumption \ref{HypoG} and $ \displaystyle \left| \sqrt{\rho_{m+1}}-  \sqrt{\rho_{k+1}} \right|  \le  
\frac{\dro}{4 \sqrt{\rho_I}} \left|\dfi\right|, \, \forall (k,m) \in \mathbb{N}^2$,
\begin{equation}\begin{aligned}\label{A7}
|A_6| & \le 
\Com \|\dfi\|_{1} \| \nabla \du\|
+
\frac{\dro}{4 \Delta t } \sqrt{ \frac{\rho_S}{\rho_I}}
\int_\Omega |\dfi|  |\Vec{u}^{n} \cdot  (\du)|
\\ &
\le  \Com \left(1
+ \frac{\dro}{\Delta t} \sqrt{\frac{\rho_S}{\rho_I}}  
\Cgu \right)
\|\dfi\|_{1} \| \nabla \du\|.
\end{aligned}\end{equation}
Combining \eqref{A02}--\eqref{A3},\eqref{A4+A5}--\eqref{A7} in \eqref{SommeAi}
and introducing the notation
\begin{equation}\label{Kufi}
\begin{aligned}
K_{u\phi} &= {\Com}  \Biggl(1 + \frac{\sigma}{\gamma  \Delta t} + \dmu\Cudc
\\ & \quad\quad\quad\quad\qquad
+ \Cgu \left(\frac{\sigma}{\gamma}\Cfid +\frac{\dro}{\Delta t} 
+ \frac{\dro}{\Delta t}\sqrt{\frac{\rho_S}{\rho_I}} 
+ \dro\Cgu \right)\Biggl), 
\\
\mathscr{K}_{uu} &=  {\Com} \Cgu (\rho_S+ \dro \Cgfi),
\end{aligned}\end{equation}
we get
\begin{equation}\label{Est-ukm1}
\|\nabla \left( \Vec{u}_{m+1} - \Vec{u}_{k+1} \right) \| \le 
\frac{K_{u\phi}}{\mu_I} \| \phi_{m+1}-\phi_{k+1}\|_{1}  
+ \frac{\mathscr{K}_{uu}}{\mu_I}  \| \nabla \left(\Vec{u}_{m} -  \Vec{u}_{k} \right) \|.
\end{equation}
Combining \eqref{Est-fikm}, \eqref{Est-ukm1}, and \eqref{Inegalite_Phi_H1}, we get \eqref{Est-ukm} with
$$K_{u0} =  \frac{K_{u\phi}\mathscr{C}_{\phi1}+ \mathscr{K}_{uu}}{\mu_I}.$$
\end{proof}

The rest of this analysis is based on the fact that, using \eqref{Est-fikm} and \eqref{Est-ukm}, 
we can clearly provide conditions on $\mathscr{C}_{\phi0}$ and $K_{u0}$ such that the sequence $\left(\Vec{u_k},\phi_k\right)$  converges strongly in $\HVoltroisd{1}\times \Lund{2}$. 
This relies on our ability to control those constants. 

For $\mathscr{C}_{\phi0}$ as it depends on two arbitrary parameters, $\beta$ and $\Delta t$ its control is relatively simple. As for $K_{u0}$, using $\mu_I$ seems to be the simplest route. However this means to impose a lower bound on the minimal value of the viscosity. Let us underline that adding such condition is not specific to the approach proposed here and can be found in other papers such as \cite{ChaDelYak2010,ChaYak2017,DetJenYak2014}. Furthermore, imposing a lower bound on the viscosity,  as a condition for existence and uniqueness of solution, is relatively frequent even in the stationnary case~\cite[Chap. 3]{BoyFab2013}.

This leads us to study the behaviour of $K_{u0}$ with respect to $\mu_I$. From the convention defining the constants (section \ref{sec_Constante}), 
$\mathscr{C}_{\phi1}$ do not depend on $\mu_I$. As for $\mathscr{K}_{uu}$ it is a linear expression with respect to $1/\sqrt{\mu_I}$. Which leaves $K_{u\phi}$  to examine. From \eqref{PkFka}, \eqref{UkFka}, and \eqref{Kufi}, we get, again based on the convention for the constants,
\begin{equation*}
\begin{aligned}
\frac{K_{u\phi}}{\mu_I} 
= \Komou_1 + 
\left( \frac{\mu_S }{\mu_I}-1\right)
\left(
\left( \frac{\mu_S }{\mu_I}-1\right)^\frac{6}{d}   
\left(\left(\Komou_2 +\frac{\mu_S}{\mu_I}\Komou_3\right)^\frac{6-d}{d}
 + \Komou_4 \right) + \Komou_5
\right)
\end{aligned}
\end{equation*}
From this we can conclude that $K_{u0}$ goes to zero when $\mu_I$ goes to infinity if 
\begin{equation}\label{limiteCfiCu}
\lim\limits_{\mu_I\to\infty} \mu_S/\mu_I = cst
\end{equation}
\begin{remark}\label{borneMu}
The condition \eqref{limiteCfiCu} is quite general, a simpler, less generic, condition can be considered. To have
$$K_{u0} < 1$$
we need to assume that $\mu_I$ is sufficiently large but also that 
$\mu_S/\mu_I$
is small enough. 
Therefore it is sufficient to
assume that there is a $\Com_\mu >0$ such that
\begin{equation}\label{bornedeltaMu}
\mu_S - \mu_I < \Com_\mu{\mu_I}\qquad \forall \mu_I.
\end{equation}
to make sure that we can control $K_{u0}$ through $\mu_I$. 
\end{remark}
With \eqref{limiteCfiCu} or \eqref{bornedeltaMu}, we get, for $\mu_I$ sufficiently large, the convergence of the \textit{whole}  sequence $\left(\Vec{u_k},\phi_k\right)$  
in $\HVoltroisd{1}\times \Lund{2}$ (and in $ \HVoltroisd{1}  \times \HVolund{1}$). 
This will be the subject of Corollary \ref{convergenceH1}.

\begin{corollary}\label{convergenceH1}
Under the assumptions of Theorem~\ref{Existence}, with $\Vec{u}^0$ divergence free. 
Assuming $\mu_I$ is sufficiently large and
the following conditions are satisfied
\begin{equation}\label{CondDeltat1}
\Delta t < \frac{\eta^2}{13\gamma}
,\quad \beta 
>\max\{\borneBeta, \frac{\eta^2}{(1-K_{u0})\gamma^2} -12\},
\end{equation}
then, at each time step, the sequence $(\Vec{u}_k, \phi_k)_{k\ge 0}$ of solutions of \eqref{AlgAC}--\eqref{AlgNSDivu} converge strongly in $ \HVoltroisd{1}  \times \HVolund{1}$ and for all integers $k$,$m$ 
\begin{equation}\label{borneGeometrique}
\|\phi_{k+1}-\phi_{m+1}\| + \|\nabla(\Vec{u}_{k+1}-\Vec{u}_{m+1}) \| \le K(\|\phi_{k}-\phi_{m}\| + \|\nabla(\Vec{u}_{k}-\Vec{u}_{m}) \|)
\end{equation}
where \begin{equation}\label{kmagique}
K= \left(\mathscr{C}_{\phi 0}+K_{u0}\right) < 1.
\end{equation}
\end{corollary}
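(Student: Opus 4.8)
The plan is to read Corollary~\ref{convergenceH1} as an almost immediate consequence of Theorem~\ref{th-Est-fikm} together with a Banach-type fixed point argument, the only genuine work being to certify that the resulting contraction factor is strictly below one. First I would simply add the two Lipschitz estimates \eqref{Est-fikm} and \eqref{Est-ukm}: their right-hand sides are built from the same quantity $\|\phi_{m}-\phi_{k}\|+\|\nabla(\Vec{u}_{m}-\Vec{u}_{k})\|$, so summing them produces at once \eqref{borneGeometrique} with $K=\mathscr{C}_{\phi0}+K_{u0}$, exactly as announced in \eqref{kmagique}. No new integration by parts is required at this stage; everything has been prepared in the previous theorem.

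The crux is therefore to verify $K<1$ under the hypotheses. Recall that $\mathscr{C}_{\phi0}=\max\bigl(\alpha_2/\alpha_0,\sqrt{\Delta t/(\alpha_0\gamma)}\bigr)$ depends only on $\gamma,\eta,\beta,\Delta t$. For the first argument I would use the identity $\alpha_2-\alpha_0=13\gamma\Delta t/\eta^2-1$, which shows $\alpha_2/\alpha_0<1$ if and only if $\Delta t<\eta^2/(13\gamma)$, i.e. precisely the first condition in \eqref{CondDeltat1}. The lower bound imposed on $\beta$ then forces $\alpha_0=1+\tfrac{\gamma\Delta t}{\eta^2}(\beta-1)$ to be large enough that the second argument $\sqrt{\Delta t/(\alpha_0\gamma)}$ is driven below $1-K_{u0}$, so that $\mathscr{C}_{\phi0}<1-K_{u0}$. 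Finally, invoking the discussion preceding the corollary, namely that $K_{u0}\to0$ as $\mu_I\to\infty$ under \eqref{limiteCfiCu} or \eqref{bornedeltaMu}, I would fix $\mu_I$ large enough that $K_{u0}$ is small, whence $\mathscr{C}_{\phi0}+K_{u0}<1$.

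The main obstacle is exactly this last verification, because $\beta$ pushes the two arguments of the maximum in \emph{opposite} directions: enlarging $\beta$ helps $\sqrt{\Delta t/(\alpha_0\gamma)}$ but sends $\alpha_2/\alpha_0$ toward $1$, and $\beta$ also enters $K_{u0}$ through $\mathscr{C}_{\phi1}$ and the regularity constants of Theorem~\ref{TheoBorneUPhi}. The role of taking $\mu_I$ large, so that $K_{u0}$ can be made as small as one wishes, is to open a nonempty window of admissible pairs $(\Delta t,\beta)$; showing that this window is nonempty is the delicate bookkeeping that must be carried out carefully.

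Once $K<1$ is secured the convergence is routine. Taking $m=k-1$ in \eqref{borneGeometrique}, the consecutive increments $e_k:=\|\phi_{k+1}-\phi_{k}\|+\|\nabla(\Vec{u}_{k+1}-\Vec{u}_{k})\|$ satisfy $e_k\le K^k e_0$, hence $\sum_k e_k<\infty$; since the estimate holds for all $k,m$, this makes $(\phi_k)$ Cauchy in $\Lund{2}$ and $(\nabla\Vec{u}_k)$ Cauchy in $\Ltroisd{2}$, so the Poincaré inequality on $\HzVoltroisd$ upgrades this to strong convergence of the \emph{whole} sequence $(\Vec{u}_k,\phi_k)$ in $\HVoltroisd{1}\times\Lund{2}$. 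To lift the phase field to $\HVolund{1}$ I would feed this Cauchy property into \eqref{Inegalite_Phi_H1}: its right-hand side $\mathscr{C}_{\phi1}\bigl(\|\phi_m-\phi_k\|+\|\Vec{u}_k-\Vec{u}_m\|\bigr)$ tends to zero, so $(\phi_k)$ is Cauchy in $\HVolund{1}$ as well, which yields convergence of $(\Vec{u}_k,\phi_k)$ in $\HVoltroisd{1}\times\HVolund{1}$ as claimed.
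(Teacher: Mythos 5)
Your proposal follows essentially the same route as the paper: the paper's proof consists of exactly two sentences, namely that adding \eqref{Est-fikm} and \eqref{Est-ukm} gives \eqref{borneGeometrique} with $K=\mathscr{C}_{\phi0}+K_{u0}$, and that ``it is relatively easy to show that $K<1$'' under \eqref{CondDeltat1}, followed by the standard Cauchy/geometric-series argument you describe. In fact you supply strictly more detail than the paper does, and the tension you flag is genuine: since $\alpha_2/\alpha_0\to 1$ as $\beta\to\infty$, the requirement $\alpha_2/\alpha_0<1-K_{u0}$ is an \emph{upper} bound on $\beta$ (for fixed $K_{u0}>0$ and $\Delta t$), so the stated lower bound on $\beta$ alone cannot close the argument and one must also invoke $K_{u0}\to 0$ as $\mu_I\to\infty$; the paper leaves this bookkeeping entirely implicit, so your honest identification of it as the delicate point is not a defect of your proposal relative to the published proof.
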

\begin{proof}
Since $\beta \ge 9/8$ Theorem \ref{th-Est-fikm} is valid, adding \eqref{Est-fikm}, \eqref{Est-ukm}, we have \eqref{borneGeometrique}.
It is relatively easy to show that $K<1$ and $(\Vec{u}_k, \phi_k)$ converges if the conditions on $\beta$ and $\Delta t$ are satisfied. 
\end{proof}

\begin{remark} 
Since we assume that $\eta^2/\gamma \ll 1$ (i.e. $\eta$ is nearly (or goes) to zero)
in most case the condition $\beta \ge \borneBeta$ is sufficient.
For a fixed time step, as $\mathscr{C}_{\phi 0} > 0$, $\beta$ can be seen as an "acceleration coefficient", as it permits to lower the convergence rate of the sequence.
%
%
%
%
Alternatively, if $K_{u0}$ approches $1$ then $\beta$ should be large to enforce the bound on the phase field and the convergence of the sequence. 
\end{remark}

\begin{theorem}\label{convergenceH2}
Under the hypothesis of Corollary \ref{convergenceH1}, at each time step $t^n = n\Delta t$, the sequence $(\Vec{u}_k, p_k, \phi_k)_{k\ge 0}$ of solutions of \eqref{AlgAC}--\eqref{AlgNSDivu} converge strongly in 
$ \HzVoltroisd \cap \HVoltroisd{2} \times L^2_0(\Omega)\cap H^1(\Omega) \times H^3(\Omega)$ to a solution $(\Vec{u}^n, p^n,\phi^n)$ of \eqref{EqSysSD}, \eqref{BC}--\eqref{IC}. Moreover  $$\|\phi^n\|_{L^\infty}\le 1.$$
\end{theorem}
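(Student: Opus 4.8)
The plan is to bootstrap the strong $\HVoltroisd{1}\times\HVolund{1}$ convergence provided by Corollary~\ref{convergenceH1} up to the regularity of the statement, and then to pass to the limit in the linearized scheme. Write $(\Vec{u},\phi)$ for the strong $\HVoltroisd{1}\times\HVolund{1}$-limit of $(\Vec{u}_k,\phi_k)$, whose existence follows from the geometric contraction \eqref{borneGeometrique} with $K<1$. Throughout, the uniform bounds of Theorem~\ref{TheoBorneUPhi} play a double role: they control the right-hand sides of the difference problems below, and they guarantee that the constants produced do not degenerate as $k,m\to\infty$. The whole argument rests on applying the elliptic (Lemma~\ref{Lemma_Elliptique}) and Stokes regularity estimates not to the iterates themselves but to the \emph{differences} of two iterates, so that the Cauchy character already established in the low norms is transferred to the high norms.

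For the phase field I would reuse the elliptic writing \eqref{Laplace_Psi}. Subtracting the equations satisfied by $\phi_{k+1}$ and $\phi_{m+1}$ and freezing the zeroth-order coefficient at index $k$ gives
\begin{equation*}
-\gamma\Delta t\,\Delta(\dfi)+c_k\,\dfi=\bigl(f^{k,n}_\phi-f^{m,n}_\phi\bigr)-(c_k-c_m)\,\phi_{m+1}\quad\text{in }\Omega,\qquad \partial_{\Vec{n}}(\dfi)=0\ \text{on }\partial\Omega,
\end{equation*}
where $c_k$ and $f^{k,n}_\phi$ are the coefficient and datum of \eqref{Laplace_Psi} at iteration $k$. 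By the uniform $L^\infty$ bound on $\phi_k$ and the estimates of Theorem~\ref{TheoBorneUPhi}, the differences $c_k-c_m$ and $f^{k,n}_\phi-f^{m,n}_\phi$ are polynomial in $\phi_k-\phi_m$ and linear in $\Vec{u}_k-\Vec{u}_m$ and $\nabla(\phi_{k+1}-\phi_{m+1})$, so the right-hand side is controlled in $\Lund{2}$ by the quantities $\|\phi_{k+1}-\phi_{m+1}\|_1$, $\|\phi_k-\phi_m\|_1$ and $\|\nabla(\Vec{u}_k-\Vec{u}_m)\|$, all of which tend to zero by Corollary~\ref{convergenceH1}. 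Lemma~\ref{Lemma_Elliptique} with $r=0$ then bounds $\|\dfi\|_2$ by these Cauchy quantities, so $(\phi_k)$ is Cauchy in $\HVolund{2}$. Feeding this $\HVolund{2}$-control, together with the $\HVoltroisd{2}$-control of $\Vec{u}_k$ obtained next, back into the same identity and invoking Lemma~\ref{Lemma_Elliptique} with $r=1$ upgrades the estimate to a Cauchy bound in $\HVolund{3}$; the only point to check is $\Vec{u}_k\cdot\nabla\phi_{k+1}\in\HVolund{1}$, which follows from $\Vec{u}_k\in\HVoltroisd{2}\hookrightarrow\Ltroisd{\infty}\cap\Vec{W}^{1,6}(\Omega)$ and $\nabla\phi_{k+1}\in\HVolund{1}\hookrightarrow\Ltroisd{6}$ through a \SPCSHY.

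For the velocity–pressure pair I would argue analogously from the Stokes writing \eqref{Stokes-VarViscosity}. The difference $\du$ and $\delta p=p_{m+1}-p_{k+1}$ solve a Stokes system with the variable viscosity $\mu_{m+1}$, whose right-hand side consists of $\Vec{f}^{m,n}_\Vec{u}(\Vec{u}_{m+1})-\Vec{f}^{k,n}_\Vec{u}(\Vec{u}_{k+1})$ together with the viscosity-mismatch term $\nabla\cdot\bigl((\mu_{m+1}-\mu_{k+1})\Vec{D}(\Vec{u}_{k+1})\bigr)$. Using the algebraic identities \eqref{Cauchy_convect}, each contribution splits into a factor estimated by Theorem~\ref{TheoBorneUPhi} times one of the Cauchy quantities $\|\dfi\|_1$, $\|\nabla\du\|$ or $\|\nabla\dup\|$. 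Repeating the $1/\mu_I$-absorption and the \SPCSHY{} splittings used in the proof of Theorem~\ref{TheoBorneUPhi} on this difference system then yields $\|\du\|_2+\|\delta p/\mu_{m+1}\|_1$ bounded by the Cauchy quantities, so that $(\Vec{u}_k)$ is Cauchy in $\HzVoltroisd\cap\HVoltroisd{2}$ and $(p_k)$ in $L^2_0(\Omega)\cap H^1(\Omega)$. I expect the bulk of the technical effort, and the main obstacle, to lie exactly here: keeping the viscosity-mismatch term and the nonlinear right-hand side under control with constants that remain bounded as $\mu_I\to\infty$, so that the resulting estimate inherits a contraction constant compatible with the smallness assumptions of Corollary~\ref{convergenceH1}.

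With strong convergence secured in $(\HzVoltroisd\cap\HVoltroisd{2})\times(L^2_0(\Omega)\cap H^1(\Omega))\times\HVolund{3}$, the last step is to pass to the limit in \eqref{AlgAC}--\eqref{AlgNSDivu}. The stabilizing term $\tfrac{\gamma}{\eta^2}\beta(\phi_{k+1}-\phi_k)$ vanishes in the limit, the Newton linearization $\tfrac{\gamma}{\eta^2}(3\phi_k^2-1)\phi_{k+1}-\tfrac{2\gamma}{\eta^2}\phi_k^3$ converges to $\gamma f(\phi)$, the lagged arguments $\Vec{u}_k,\phi_k$ merge with $\Vec{u}_{k+1},\phi_{k+1}$, and $\rho_{k+1},\mu_{k+1}$ converge through \eqref{AlgRhoMu}; the strong $\HVoltroisd{2}\times\HVolund{3}$ convergence, which embeds into $\Ltroisd{\infty}$ and $\mathcal{C}^1(\overline{\Omega})$, is precisely what makes the quadratic and cubic products converge. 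Hence $(\Vec{u},p,\phi)$ solves \eqref{EqSysSD} with \eqref{BC}--\eqref{IC}. Finally, the maximum principle passes to the limit: since $\phi_k\to\phi$ strongly in $\HVolund{2}\hookrightarrow\mathcal{C}^0(\overline{\Omega})$ and $\|\phi_k\|_{L^\infty}\le1$ for every $k$ by Theorem~\ref{Th-existnce}, uniform convergence yields $\|\phi\|_{L^\infty}=\|\phi^n\|_{L^\infty}\le1$.
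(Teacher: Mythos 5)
Your proposal is correct and follows the same overall strategy as the paper -- establish that the iterates are Cauchy in the high norms by exploiting the equations and the low-norm convergence of Corollary~\ref{convergenceH1}, then pass to the limit term by term and transfer the $L^\infty$ bound by uniform convergence -- but the technical route differs in both components. For the phase field, the paper does not apply Lemma~\ref{Lemma_Elliptique} to the difference system as you do; it tests the subtracted equation directly with $\psi=-\Delta(\phi_{k+1}-\phi_{m+1})$ (and then with $-\Delta\nabla(\phi_{k+1}-\phi_{m+1})$ for the $H^3$ step), reading off the convergence of $\|\Delta\dfi\|$ and $\|\Delta\nabla\dfi\|$ from an energy identity. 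The two routes deliver the same information; yours leans on the elliptic regularity constant being uniform in $k$ (which holds since $\Vec{u}_k$ is uniformly bounded in $\HVoltroisd{2}$ by Theorem~\ref{TheoBorneUPhi}), while the paper's avoids that dependence at the cost of redoing the product estimates by hand. For the velocity, the paper does not re-run the Stokes regularity estimate on the difference system; it introduces the quantities $\Vec{v}_{k,m}$ and $\Vec{v}^R_{k,m}$, shows they vanish using the momentum equation and the already-established convergences, and extracts $\|\mu_{k+1}\nabla\cdot\Vec{D}(\du)\|\to 0$ from the triangle inequality, treating the pressure separately by the inf--sup condition. Your Stokes-regularity route is more systematic and yields $\|\du\|_2$ and the $H^1$ pressure estimate in a single stroke, at the price of the heavier bookkeeping you anticipate. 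One clarification on the step you flag as the main obstacle: no contraction constant is needed there. Since the Cauchy property in $\HVoltroisd{1}\times\HVolund{1}$ (and, by your first step, in $\HVolund{2}$) is already secured, it suffices that the right-hand side of the difference Stokes system tends to zero and that the few terms involving $\|\du\|_2$ itself can be absorbed with a factor strictly less than one -- which is exactly the $1/\mu_I$-absorption already available under the largeness assumption on $\mu_I$ inherited from Corollary~\ref{convergenceH1}. So the concern is legitimate as a matter of bookkeeping but does not threaten the argument.
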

\begin{proof}
Assuming the sequence of the solutions of \eqref{AlgAC}--\eqref{AlgNSDivu} converge strongly in $ \HzVoltroisd \cap \HVoltroisd{2} \times L^2_0(\Omega)\cap H^1(\Omega) \times H^3(\Omega)$,
it is obvious that the limit of this sequence $(\Vec{u}^n,p^n,\phi^n)$ is a solution of \eqref{EqSysSD} with \eqref{BC}--\eqref{IC}. Moreover, under this assumption, using Theorem~\ref{Th-existnce} we have $\| \phi^n \|_{\Vec{L^{\infty}}(\Omega)} \le 1.$ 

To complete the proof all we need is to demonstrate the strong convergence of the solutions of \eqref{AlgAC}--\eqref{AlgNSDivu} in $ \HzVoltroisd \cap \HVoltroisd{2} \times L^2_0(\Omega)\cap H^1(\Omega) \times H^3(\Omega)$.
Subtracting \eqref{AlgAC} for two integers $k \neq m$ then using the test function $$\psi = - \Delta (\phi_{k+1} - \phi_{m+1})\in \Lund{2}$$ (from Theorem \ref{Th-existnce} we have $\phi_k \in \HVolund{3}$) 
and using \eqref{Cauchy_convect} once again
\begin{equation*}\begin{aligned}
\alpha_0  \| \nabla ( \phi_{k+1}  - &\phi_{m+1} ) \|^2 
+ \gamma \Delta t \| \psi \|^2 \\
& = \Delta t  \int_\Omega \left(
\Vec{u}_{k} \cdot \nabla (  \phi_{k+1} - \phi_{m+1}) 
+ 
( \Vec{u}_{k} - \Vec{u}_{m}  ) \cdot \nabla \phi_{m+1}
\right) \psi
\\
& \quad + \frac{3\gamma \Delta t}{\eta^2}  \int_\Omega
\left(\phi_k^2 ( \phi_{k+1} - \phi_{m+1})
+\phi_{m+1}( \phi_{k} + \phi_{m})  ( \phi_{k} - \phi_{m})\right)
\psi
\\
& \quad + \frac{\gamma \Delta t}{2\eta^2}(2\beta +1) 
\int_\Omega \left( \phi_{k} - \phi_{m}  \right)\psi
+ \frac{\gamma \Delta t}{2\eta^2} 
\int_\Omega \left( \phi_{k}^3 - \phi_{m}^3  \right)\psi
\end{aligned}\end{equation*}
which gives
\begin{equation*}\begin{aligned} 
\gamma \Delta t \| \Delta \left( \phi_{k+1} - \phi_{m+1}\right) \| 
& 
\le \Delta t  \left(  \Cu \| \nabla \left(  \phi_{k+1} - \phi_{m+1}  \right) \|  + \Cgfi \|  \left(   \Vec{u}_k - \Vec{u}_m \right)\| \right) 
\\
& \qquad +  \frac{3\gamma \Delta t}{\eta^2}\left(   
\| \phi_{k+1} - \phi_{m+1} \| + 
\left( 2\beta + 10 \right)  \| \phi_{k} - \phi_{m}   \|  \right) 
\end{aligned}\end{equation*}
From Corollary \ref{convergenceH1} 
we get the convergence of $\Delta \phi_{k}$ in $\Lund{2}$ hence, recalling Theorem~\ref{TheoBorneUPhi}, the convergence of $\phi_k$ in $\HVolund{2}$. 

Concerning the convergence  of $\phi_k$  in $\HVolund{3}$, we use the same technique. From Theorem~\ref{Th-existnce}, we can apply the $\nabla$ operator on \eqref{AlgAC} at $k$ and $m$. Subtracting both equations and using the test function 
$$\psi = \- \Delta \nabla (\phi_{k+1} - \phi_{m+1})\in \Lund{2}$$ 
we get
the convergence of $\Delta \nabla \phi_{k}$ in $\Lund{2}$ from which we conclude the convergence of $\phi_k$ in $H^{3}$.
For the convergence of the sequence $\Vec{u}_k$ in $\HVoltroisd{2}$, we 
introduce
\begin{equation*}\begin{aligned}
\Vec{v}_{k,m} &= \|\nabla \cdot \left( \mu_{k+1} \Vec{D} \left( \Vec{u}_{k+1} \right) -  \mu_{m+1}\Vec{D} \left( \Vec{u}_{m+1} \right) \right)\|\\
\Vec{v}^R_{k,m} &= \|\nabla\mu_{k+1}\Vec{D} \left( \Vec{u}_{k+1} - \Vec{u}_{m+1} \right) 
+ \delta\mu \nabla \cdot \left( (\phi_{k+1}-\phi_{m+1}) \Vec{D} \left( \Vec{u}_{m+1} \right)\right)\|
\\
\end{aligned}\end{equation*}
Based on  \eqref{AlgNSDivu} (Corollary \ref{convergenceH1} gives the convergence in $\HVoltroisd{1}$ for $(\Vec{u}_k)_{k \ge 0}$ and we prooved the convergence in  $\HVolund{3}$ for $(\phi_{k})_{k \ge 0}$), subtracting \eqref{AlgNSDivu} at $m\neq k$, we conclude that
$\Vec{v}_{k,m}$ and $\Vec{v}_{k,m}^R$ converge to zero. 
Then, from 
$$
\|\mu_{k+1}\nabla \cdot \Vec{D} \left( \Vec{u}_{k+1} - \Vec{u}_{m+1} \right)\| \le \Vec{v}_{k,m} + \Vec{v}^R_{k,m} 
$$
we get $(\Vec{u}_{k})_{k \ge 0}$ converges in $\HVoltroisd{2}$.
Finally, for the convergence of the pressure sequence, the usual argument based on the inf--sup condition is used (see~\cite{BoyFab2013} for example).
\end{proof}

This also completes the demonstration of Theorem~\ref{Le_resultat} which is merely a corollary of Theorem~\ref{convergenceH2}.

\section{Numerical results}\label{NumRes}
The goal of this section is 
to illustrate 
the effectiveness of a totally implicit scheme for the Navier-Sokes/Allen-Cahn model. 
%
With that in mind, a single numerical test is sufficient: an analytical test based on a two dimensional manufactured solution.
The fixed point strategy constructed to establish Theorem~\ref{Le_resultat}, offers a first numerical recipe to approximate the solution of \eqref{EqNS0}. 
\subsection{Methodology}
Obviously various strategies, efficient and well adapted to the physical context,  could be used to discretise and solve \eqref{AlgAC} and \eqref{AlgNSDivu}. Here, the finite element method has been retained for the spatial discretisation.

We used proper degrees of interpolation for each variables. For the fluid a Taylor-Hood interpolation was retained, for the velocity a quadratic interpolation (P2) and for the pressure a linear (P1) interpolation, insuring the respect of the inf--sup condition (see \cite{GueMinShe2006,BofBreFor2013}). As for the phase field we chose a P2 interpolation.
Since efficiency of the fully discrete algorithm is not the purpose of this work, we chose to leave the system as is: using a $LU$ solver on a \textit{pressure penalised} algebraic system for $\eqref{AlgNSDivu}$, with a value of $10^{-8}$ for the pressure penalisation.

For these tests, a uniform triangular mesh of sufficiently small size, $100\times 100$,  is used, this insure a negligible spatial error for the chosen range of time steps.
The algorithm is implemented using FreeFem++ a finite element software, see \cite{freefem}. 
 As for the fixed point (Step 5 in Algorithm~\ref{Algo1}), a tolerance of $10^{-9}$ on the sum of the norm of the variations of $\phi$ and $\Vec{u}$ is used as a stopping criteria.

\subsection{Algorithms}
Applying the spatial discretization  to \eqref{AlgAC}--\eqref{AlgNSDivu}, a basic strongly coupled implicit algorithm, called \textit{fully implicit Newton} method (FIN) is obtained 

\begin{algorithm}[H]
\caption{Simple totally implicit algorithm}\label{Algo1}
\begin{algorithmic}[1]
\STATE Given $(\Vec{u}^0, p^0, \phi^0)$ and $\Delta t$
\FOR{$n=1,...$}
\STATE $t^{n+1} = t^n+\Delta t$
\STATE $(\Vec{u}_0, p_0, \phi_0) = (\Vec{u}^n, p^n, \phi^n)$
\WHILE{not satisfactory} 
\STATE{Compute $\phi_{k+1}$ by solving the finite element version of \eqref{AlgAC}}
\STATE Update the density and viscosity with \eqref{AlgRhoMu}
\STATE{Compute $(\Vec{u}_{k+1}, p_{k+1})$ by solving the finite element version of \eqref{AlgNSDivu}}
\ENDWHILE
\STATE $(\Vec{u}^{n+1}, p^{n+1}, \phi^{n+1}) = (\Vec{u}_{k+1}, p_{k+1}, \phi_{k+1})$
\ENDFOR
\end{algorithmic}
\end{algorithm}
Since we are using a fixed point approach, we also consider the following linearization
$$
f(\phi_{k+1}) = \phi_{k+1}(\phi_{k+1}^2 -1) \approx \phi_{k+1}(\phi_{k}^2 -1).
$$
in \eqref{EqACDisc}. We call this modification a \textit{Picard fixed point} approach. In that case $\beta\ge~2$ suffice to insure uniform boundedness of the phase field and Theorem~\ref{Th-existnce} is valid with the modified bound on $\beta$. This simple linearisation seems a natural choice, we propose to use it as a comparative. Replacing \eqref{AlgAC} by 


\begin{equation}\label{AlgACPicard}
\left \{
\begin{array}{l}
\begin{aligned}
\left(1+ \frac{\gamma \Delta t}{\eta^2} \left( \beta + \phi_k^2 -1 \right)\right)\phi_{k+1}& + \Delta t \Vec{u}_k \cdot \nabla \phi_{k+1}
-\gamma \Delta t \Delta \phi_{k+1} \\
& = \phi^n + \frac{\gamma \Delta t}{\eta^2} \beta\phi_k \quad \mbox{in}\, \Omega
\end{aligned} \\
\partial_{\Vec{n}} \phi_{k+1} = 0\quad \mbox{on}\quad \partial \Omega
\end{array}
\right.
\end{equation}
we still get a strongly coupled fully implicit semi-discretization. Using the finite element method on \eqref{AlgACPicard}, \eqref{AlgRhoMu}--\eqref{AlgNSDivu} we get a new algorithm, called the \textit{fully implicit Picard} method (FIP).


As a second comparative approach, we built an \textit{explicit} formulation 
by putting $\beta=0$ and expliciting the non linear term. Replacing \eqref{AlgAC} by 
\begin{equation}\label{AlgACExplicit}
\left \{
\begin{array}{l}
\begin{aligned}
\phi_{k+1} + \Delta t \Vec{u}_k \cdot \nabla \phi_{k+1}
& -\gamma \Delta t \Delta \phi_{k+1}\\ & = \phi^n + \frac{\gamma \Delta t}{\eta^2} (1-(\phi^n)^2))\phi^n \quad \mbox{in}\, \Omega
\end{aligned} \\
\partial_{\Vec{n}} \phi_{k+1} = 0\quad \mbox{on}\quad \partial \Omega
\end{array}
\right.
\end{equation}
we get a \textit{strongly coupled explicit semi-discretization} approach. Using the finite element method on \eqref{AlgACExplicit}, \eqref{AlgRhoMu}--\eqref{AlgNSDivu} we get a new algorithm, called the \textit{strongly coupled explicit} method (SCE).

%
%
\subsection{Numerical test}
For the numerical experiments a problem having an analytical solution is used. It is inspired by the 
the finite element tests proposed in \cite[Accuracy test 4.2]{SheYan2010}
\begin{equation}
\begin{cases}
\Omega = ]-1,1[\times]-1,1[,\ \eta = 0.1,\ \gamma =1 ,\ \sigma = 1,&\\
T= 10\eta^2/(13\gamma),\ \rho_a = 3, \rho_b = 1, \mu_a = \mu_b = 1, & \\
\phi(t,x,y) = t(x+2)^2/(2T)-1, & \\
u_1(t,x,y) = \pi \sin(2\pi y)\sin^2(\pi x)\sin(t), & \\
u_2(t,x,y) = -\pi \sin(2\pi x)\sin^2(\pi y)\sin(t), & \\
p(t,x,y) = \cos(\pi x)\sin(\pi y)\sin(t).
\end{cases}
\end{equation}
Note that $\phi(t,x,y) \in [-1,1]$ is a polynomial of degree two in space and linear in time, therefore the approximation, using degree two in space and BDF1 in time, should be exact. Suitable boundary conditions on $\Vec{u}$ and $\phi$ and forcing terms depending on the analytical solution are added. 
%

Table~\ref{Le_tablo} gives the $L^2$ error on $\Vec{u}$ and $\phi$ at time $10/1\,300$. As a measure of precision we define
$$E_u = \max\limits_n \|\Vec{u}^n - \Vec{u}(t^n)\|,
\qquad E_\phi = \max\limits_n \|\phi^n - \phi(t^n)\|.$$
The computational cost is quantified using the total number of times the algebraic system corresponding to the Navier-Stokes equations \eqref{AlgNSDivu} is solved. We limit ourselves to a few time step, as it suffice to convincingly demonstrate our point.
For the fully implicit Newton method, the time step $\Delta t = \eta^2/(13\gamma)$ and parameter $\beta =9/8$ are fixed following Corollary~\ref{convergenceH1}.  
For the FIP method, we use the same time step but with $\beta=2$, a value insuring uniform boundedness of $\phi$ for this method. As the lower bound on $\beta$ is only a sufficient condition, two tests using $\beta = 0$ for the FIN and FIP methods are also presented. 

\begin{table}[t]
    \centering
    \begin{tabular}{l c c c c c c}
\hline
Method & $\beta$ & $\Delta t$ &N &\# of solves & $E_u$ & $E_\phi$ \\
\hline\hline
FIN       & 0 & 1/1\,300 & 10 & 40  & 6.74601e-07 & 9.89655e-11\\
FIN       & 9/8 & 1/1\,300 & 10 & 90  & 6.74601e-07 & 3.02437e-10\\
FIP       & 0 & 1/1\,300 & 10 & 97 & 6.74601e-07 & 2.47127e-10 \\
FIP       & 2 & 1/1\,300 & 10 & 102 & 6.74601e-07 & 3.32047e-10 \\
SCE  & 0 & 1/1\,300 & 10 & 31  & 0.00176404 & 0.031862 \\
SCE  & 0 & 1/13\,000 & 100 & 295 & 0.00020879 & 0.003257 
    \end{tabular}
    \caption{Precision ($L^2$ error) and computational cost (total number of algebraic system solved) with respect to the three methods (fully implicit Newton method (FIN), fully implicit Picard method (FIP) and strongly coupled explicit method (SCE)).}
    \label{Le_tablo}
\end{table}
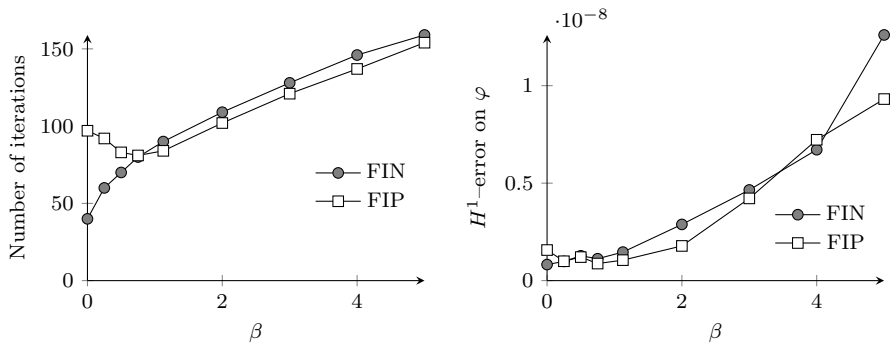
\begin{figure}[hbtp]
\footnotesize
\begin{center}
\begin{tikzpicture}[scale=1]
\begin{axis}[
width=.5\textwidth,
height=.4\textwidth,
xmax=5.,
xmin=0,
ymin=0,
xlabel={$\beta$},
ylabel={Number of iterations},
axis lines=left,
legend style={draw=none, at={(.65,.52)},anchor=north west},
    cycle list name=black white]
\addplot [mark=*, mark options={fill=black!50}]
coordinates {
( 0       ,     40     )
( 0.25    ,        60  )
( 0.5     ,       70   )
( 0.75    ,        80  )
( 1.125   ,         90 )
( 2       ,     109    )
( 3       ,    128     )
( 4       ,     146    )
( 5       ,   159      )
( 6       ,  177       )
( 7       ,   193      )
};
\addlegendentry{FIN}

\addplot  [mark=square*, mark options={fill=black!0}]
coordinates {
 (0     ,    97   )
 (0.25  ,       92)
 (0.5   ,      83 )
 (0.75  ,       81)
 (1.125 ,       84)
 (2     ,    102  )
 (3     ,    121  )
 (4     ,  137    )
 (5     ,    154  )
 (6     ,  171    )
 (7     ,  187    )
};
\addlegendentry{FIP}
\end{axis}[
\end{tikzpicture}
\quad
\begin{tikzpicture}[scale=1]
\begin{axis}[
width=.5\textwidth,
height=.4\textwidth,
xmax=5.,
xmin=0,
ymin=0,
xlabel={$\beta$},
ylabel={$H^1$--error on $\phi$},
axis lines=left,
legend style={draw=none, at={(.65,.35)},anchor=north west},
    cycle list name=black white]
\addplot [mark=*, mark options={fill=black!50}]
coordinates {
( 0       ,   8.23678e-10     ) 
( 0.25    ,      9.86187e-10  ) 
( 0.5     ,     1.27323e-09   ) 
( 0.75    ,      1.12277e-09  ) 
( 1.125   ,       1.46326e-09 ) 
( 2       ,   2.88311e-09     ) 
( 3       ,  4.65162e-09      ) 
( 4       ,   6.71539e-09     ) 
( 5       , 1.26021e-08       ) 
( 6       , 1.2288e-08        ) 
( 7       , 1.54155e-08       ) 
};
\addlegendentry{FIN}
\addplot  [mark=square*, mark options={fill=black!0}]
coordinates {
 (0     ,   1.56854e-09     ) 
 (0.25  ,      9.94185e-10  ) 
 (0.5   ,     1.20717e-09   ) 
 (0.75  ,      8.75327e-10  ) 
 (1.125 ,       1.05063e-09 ) 
 (2     ,   1.77934e-09     ) 
 (3     ,   4.21775e-09     ) 
 (4     , 7.22477e-09       ) 
 (5     ,   9.31331e-09     ) 
 (6     , 1.14541e-08       ) 
 (7     , 1.24367e-08       ) 
};
\addlegendentry{FIP}
\end{axis}[
\end{tikzpicture}
\caption{ Effect on $\beta$ on the FIN and FIP method in this specific case. On the left, the effect on the total number of iteration (computational efficiency). On the right, the $H^1$ norm of the error for both method. Values computed at $\beta = 0, 0.25, 0.5, 0.75, 1.125, 2 , 3, 4, 5$}
\label{La_fig}
\end{center}
\end{figure}
As usual in numerical methods the context of use is important. This example does not exclude the relevance of simpler approaches such as explicit or weakly coupled methods. It simply underline the effectiveness of implicit and strongly coupled methods. 
Concerning the use of a weakly coupled scheme, this relatively simple example does not seem to land itself to such methods. Weakly coupled  strategies were excluded from this analysis as in our very few experiment they all exhibited poor precision even when compared to the SCE method. Let summarize some observations resulting from Table~\ref{Le_tablo} and Figure~\ref{La_fig}
\begin{itemize}
    \item For the FIN and FIP method the level of precision for $\phi$ is such that we can consider the error on the velocity $\Vec{u}$ as produced by the finite element approximation of the Navier-Stokes equation.
    \item There is a clear gain in precision for the implicit methods, FIN and FIP, when compared to the explicit approach. At equal precision (or at equal time step), the choice of an implicit scheme is obvious. Based on the two experiments with the SCE method and the gap in precision, argument in favor of the explicit method, based on efficiency, cannot be made. 
    \item Generally speaking, considering the loss of information in the Picard method, a Newton-type approach seems preferable. However, in this specific case, from Figure~\ref{La_fig}, these two methods are relatively similar (in efficiency and precision) when using $\beta\in [0.75,4]$.
    \item The parameter $\beta$ is basically a "theoretical trick" producing Theorem~\ref{Le_resultat}. In the FIN method, $\beta>0$ can be interpreted as perturbing the Newton method. For the FIN method, from a numerical perspective, the use of $\beta=0$, is optimal in term of efficiency (Tableau~\ref{Le_tablo} and Figure~\ref{La_fig}). 

\end{itemize}




\section{Conclusion}

This work propose a proof of the well posed character of a fully implicit strongly coupled semi-discrete NS-AC system (Theorem~\ref{Le_resultat}). We consider this theoretical result as the basis needed for the analysis of various numerical scheme (not restricted to finite element approaches)) aiming at the approximation of the solution of the NS-AC system. The demonstration offered a simple and relatively efficient time scheme which allowed us to produce a finite element approximation. The numerical tests have established clearly the relevance of implicit and strongly coupled schemes for the NS-AS system, therefore reinforcing the significance of Theorem~\ref{Le_resultat}.


\section*{References}
\biboptions{sort&compress}
\bibliographystyle{model1-num-names}
\bibliography{banquebiblio}
\end{document}